\newtheorem{thm}{Theorem}[section]
\newtheorem{lem}[thm]{Lemma}
\newtheorem{cor}[thm]{Corollary}
\newtheorem{claim}[thm]{Claim}
\title{\Ifps{} of Sparse Graphs}
\author{
Axel Brandt$^{1,6}$
%\thanks{
%Dept. of Math. and Stat. Sciences,
%Univ. of Colorado Denver,
%Axel.Brandt@ucdenver.edu},
\and
Michael Ferrara$^{1,6,7}$
%\thanks{
%Dept. of Math. and Stat. Sciences,
%Univ. of Colorado Denver,
%Michael.Ferrara@ucdenver.edu},
\and
Mohit Kumbhat$^{2,6}$
%\thanks{
%Dept. of Mathematics,
%Iowa State University,
%mkumbhat@iastate.edu},\\
\and
Sarah Loeb$^{3,6}$
%\thanks{
%Dept. of Mathematics,
%University of Illinois at Urbana--Champaign,
%sloeb2@illinois.edu},
\and
Derrick Stolee$^{2,4,6}$
%\thanks{
%Dept. of Mathematics and Dept. of Comp. Sci.,
%Iowa State University,
%dstolee@iastate.edu},
\and
Matthew Yancey$^{5,6}$
%\thanks{
%Institute for Defense Analyses / Center for Computing Sciences,
%mpyance@super.org}
}
\begin{document}
%\linenumbers

\newcommand{\Mad}{\operatorname{Mad}}
\newcommand{\p}{\rho}
\newcommand{\f}{\mathcal{F}}
\newcommand{\I}{\mathcal{I}}
\newcommand{\ic}{\mu_0}
\newcommand{\mc}{\mu_1}
\newcommand{\fc}{\mu_3}
\renewcommand{\P}{\varphi}
\renewcommand{\bar}{\overline}
\newcommand{\Ifp}{I,F-partition}
\newcommand{\Ifps}{I,F-partitions}

\maketitle

\footnotetext[1]{Department of Mathematical and Statistical Sciences, University of Colorado Denver, Denver, CO 80217 ; {\tt $\{$axel.brandt,michael.ferrara$\}$@ucdenver.edu}.}
\footnotetext[2]{Department of Mathematics, Iowa State University, Ames, IA, U.S.A. \texttt{$\{$mkumbhat,dstolee$\}$@iastate.edu}}
\footnotetext[3]{Department of Mathematics, University of Illinois at Urbana-Champaign, Urbana, IL, U.S.A. \texttt{sloeb2@illinois.edu}}
\footnotetext[4]{Department of Computer Science, Iowa State University, Ames, IA, U.S.A.}
\footnotetext[5]{Institute for Defense Analyses / Center for Computing Sciences, \texttt{mpyance@super.org}}
\footnotetext[6]{Research supported in part by NSF grant DMS-1500662 ``The 2015 Rocky Mountain - Great Plains Graduate Research Workshop in Combinatorics".}
\footnotetext[7]{Research supported in part by a Collaboration Grant from the Simons Foundation (\#206692 to Michael Ferrara).}

\begin{abstract}
A \textit{star $k$-coloring} is a proper $k$-coloring where the union of two color classes induces a star forest.
While every planar graph is 4-colorable, not every planar graph is star 4-colorable.
One method to produce a star 4-coloring is to partition the vertex set into a 2-independent set and a forest; such a partition is called an \textit{I,F-partition}.
We use a combination of potential functions and discharging to prove that every graph with maximum average degree less than $\frac{5}{2}$ has an I,F-partition, which is sharp and answers a question of Cranston and West [A guide to the discharging method, arXiv:1306.4434].  This result implies that planar graphs of girth at least 10 are star 4-colorable, improving upon previous results of Bu, Cranston, Montassier, Raspaud, and Wang [Star coloring of sparse graphs, {\it J. Graph Theory} \textbf{62} (2009), 201-219].

\noindent
\textbf{Keywords:} star coloring; \Ifp{}; discharging method; potential function

\noindent
\textbf{MSC Code:} 05C10 (planar), 05C15 (coloring), 05C78 (labeling)
\end{abstract}

%%%%%%%%%%%%%%%%%%%%%%%%%%%%%%%%%%%%	Introduction
\section{Introduction}
All graphs considered in this paper are simple, and the reader is refered to \cite{W} for any undefined terminology.  A $k$-coloring $c:V(G) \to \{1,\ldots,k\}$ of a graph $G$ is \textit{proper} if $c$ assigns distinct colors to adjacent vertices. The \textit{chromatic number} of $G$ is the minimum $k$ such that $G$ has a proper $k$-coloring.
First introduced by Gr{\"u}nbaum~\cite{G}, a proper vertex coloring is \emph{acyclic} if the union of any two color classes induces a forest.  The minimum $k$ such that $G$ has an acyclic $k$-coloring is the \emph{acyclic chromatic number} of $G$, denoted $\chi_a(G)$.  An acyclic $k$-coloring of $G$ is a \emph{star $k$-coloring} if the components of the forest induced by the union of two color classes are stars and the minimum $k$ such that $G$ has a star $k$-coloring is the star chromatic number of $G$, denoted $\chi_s(G)$. It follows immediately that $\chi(G) \le \chi_a(G) \le \chi_s(G)$ for any graph $G$, although is not difficult to see that $\chi\ne\chi_a$ in general by considering, for instance, any bipartite graph containing a cycle.  We refer the reader to the thorough survey of Borodin~\cite{BorodinSurvey} for additional results on acyclic and star colorings beyond what we present next.

In this paper, we are interested in the problem of star-coloring planar graphs.
The well-known Four Color Theorem of Appel and Haken \cite{AH,AHK} states that $\chi(G)\leq4$ if $G$ is planar, while Gr{\"u}nbaum~\cite{G} constructed a planar graph with no acyclic 4-coloring (and so, in particular, no star coloring).  Subsequently, Borodin~\cite{B} showed $\chi_a(G)\leq5$ for all planar $G$.
Albertson, Chappell, Kierstead, K{\"u}ndgen, and Ramamurthi~\cite{ACKKR} showed that every planar graph $G$ satisfies $\chi_s(G)\le 20$ and also constructed a planar graph with star chromatic number at least 10.  K\"{u}ndgen and Timmons \cite{KT} proved that every planar graph of girth 6 (respectively 7 and 8) can be star-colored with 8 (respectively 7 and 6) colors.  Kierstead, K\"undgen and Timmons \cite{KKT} showed that every bipartitie planar graph can be star 14-colored, and constructed a bipartite planar graph with star chromatic number 8.  It is worthwhile to note that, while not our focus here, the results in \cite{KT} and \cite{KKT} hold for the natural extension of star-colorings to a list coloring framework.

Given the Four Color Theorem, it is natural to search for conditions that ensure a planar graph can be star 4-colored.
Albertson \textit{et al.}~\cite{ACKKR} also showed that for every girth $g$, there exists a graph $G_g$ with girth at least $g$ and $\chi_s(G_g) = 4$, and further that there is some girth $g$ such that every planar graph of girth at least $g$ is star 4-colorable.
Timmons~\cite{T} showed that $g=14$ is sufficient and also gave a planar graph with girth 7 and star chromatic number 5.
Bu, Cranston, Montassier, Raspaud, and Wang~\cite{BCMRW} improved upon Timmons' result by showing that every planar graph with girth $g\ge 13$ has a star 4-coloring.

 The \emph{maximum average degree} of a graph $G$, denoted $\Mad(G)$, is $\max\limits_{H \subseteq G} \frac{2 |E(H)|}{|V(H)|}$.  The main result of this paper is the following.

\begin{thm} \label{thm:starcol}
If $G$ is a graph with $\Mad(G) < \frac{5}{2}$, then $\chi_s(G) \le 4$.
\end{thm}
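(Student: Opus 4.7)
The plan is to reduce the theorem to the existence of an \Ifp{}, and then prove that existence by a minimum-counterexample argument that combines a potential function on vertex subsets with a discharging phase.

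First I would verify the standard reduction: if $V(G)$ partitions as $I \cup F$ where $I$ is $2$-independent (any two vertices of $I$ are at distance at least $3$) and $G[F]$ is a forest, then $\chi_s(G) \le 4$. Color the vertices in $I$ with color $1$, and star $3$-color the forest $G[F]$ with colors in $\{2,3,4\}$ (every forest admits a star $3$-coloring). The coloring is proper because $I$ is independent and $G[F]$ is properly colored. For the star property, the union of color classes $2,3,4$ already induces a star forest inside $F$. A bicolored path of length $3$ using color $1$ would force two vertices of $I$ to share a neighbor or be adjacent, both precluded by the $2$-independence of $I$. A bicolored cycle using color $1$ is likewise impossible. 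Hence any \Ifp{} of $G$ yields a star $4$-coloring, and it suffices to prove that every graph $G$ with $\Mad(G) < 5/2$ admits an \Ifp{}.

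For the main technical statement I would take a minimum counterexample $G$ with $\Mad(G) < 5/2$ and no \Ifp{}, and derive a contradiction via the discharging method guided by a potential function. A natural choice is a function of the form $\p(S) = a|S| - b|E(G[S])|$ on subsets $S \subseteq V(G)$, with $a,b$ chosen so that $\p(V(G)) < 0$ is equivalent to the density bound $\Mad(G) < 5/2$ (roughly $a=2, b=5$, or a suitable rescaling). Minimality of $G$ forces every proper subgraph to have an \Ifp{}, so I would look to show that certain local configurations (small-degree vertices clustered together — e.g., $2$-vertices adjacent to $2$-vertices, short threads, $3$-vertices with many low-degree neighbors) are reducible: contracting or deleting them produces a smaller graph whose \Ifp{} can be extended. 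The core lemma is an assertion that every non-extendable configuration $H$ in $G$ must satisfy $\p(V(H)) \ge $ some threshold forcing $H = G$, which together with $\p(V(G)) < 0$ yields a contradiction.

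I would then give each vertex an initial charge equal to its degree (so the total charge is $2|E(G)|<\tfrac{5}{2}|V(G)|$) and design discharging rules whereby high-degree vertices send fixed amounts of charge to nearby low-degree vertices. Using the reducibility lemmas to rule out the "bad" local configurations, I would argue that every vertex ends with final charge at least $\tfrac{5}{2}$, contradicting the total. The main obstacle I anticipate is twofold: first, calibrating the potential function so that enough local configurations are reducible (in particular, handling $3$-vertices and chains of $2$-vertices will be delicate, because a $2$-vertex can freely join either part of an \Ifp{} but long chains can force conflicting assignments); and second, designing the discharging rules so that after eliminating these configurations, no vertex is left with deficient final charge. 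Threads of $2$-vertices between higher-degree vertices are the typical pressure points at the density $\tfrac{5}{2}$, and most of the technical work should be confined to their analysis.
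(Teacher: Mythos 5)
Your outline follows the paper's high-level route: reduce to the existence of an \Ifp{} (this reduction is correct), take a minimum counterexample, and combine a potential function on vertex subsets with discharging, with threads of 2-vertices as the pressure points. That much matches. But there is a genuine gap in the middle: the potential function you propose, $\p(S) = a|S| - b|E(G[S])|$, is uniform over vertices and merely rephrases the $\Mad$ hypothesis. Earlier work of Timmons and of Bu \emph{et al.} took essentially that route and topped out at $\Mad < \tfrac{7}{3}$ and $\Mad < \tfrac{26}{11}$. What the paper actually does to reach the sharp $\tfrac{5}{2}$ is strengthen the statement being proved by induction: it works with \emph{assigned} graphs $G$ with a preassigned tripartition $I \sqcup F \sqcup U$, asks for an \Ifp{} that extends the preassignment, and uses a potential that weights the parts differently, $\p(H) = |I(H)| + 4|F(H)| + 5|U(H)| - 4|E(H)|$, demanding $\p(H) > 0$ for all nonempty $H$. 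This is not bookkeeping; it is the engine.

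The reason your version would stall is exactly where you write ``contracting or deleting them produces a smaller graph whose \Ifp{} can be extended.'' After deleting a configuration, the extension typically needs certain border vertices to land in $\f$; a plain \Ifp{} of the smaller graph has no reason to cooperate and might put one of them in $\I$, blocking the extension. The paper handles this by reassigning those border vertices from $U$ to $F$ \emph{before} invoking the induction hypothesis, and the skewed weights in $\p$ (together with the small-potential structure lemma, Lemma~\ref{lem:kahuna}) are precisely what certify that this reassignment keeps all potentials positive, hence keeps the smaller assigned graph inside the inductive class (Lemma~\ref{lem:move}). Without that mechanism your reducibility lemmas will not close, and you would end up reproving a weaker density threshold. (A smaller point: your sign convention is inverted. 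The hypothesis $\Mad(G) < \tfrac{5}{2}$ corresponds to $\p(H) > 0$ for all nonempty $H$, not $\p(V(G)) < 0$.)
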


A straightforward application of Euler's formula shows that if $G$ is a planar graph with girth at least $g$, then $\Mad(G) < \frac{2 g}{g - 2}$.
Thus, as a corollary to Theorem~\ref{thm:starcol} we have the following improvement on \cite{BCMRW}.

\begin{cor} \label{cor:planar}
If $G$ is a planar graph with girth at least $10$, then $\chi_s(G) \le 4$.
\end{cor}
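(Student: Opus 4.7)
The plan is to reduce Corollary \ref{cor:planar} to Theorem \ref{thm:starcol} by verifying the hypothesis $\Mad(G) < \frac{5}{2}$ on every planar graph $G$ of girth at least $10$, exactly as indicated in the paragraph preceding the corollary statement. Since subgraphs of planar graphs are planar and taking subgraphs cannot decrease the girth (unless the subgraph becomes acyclic, in which case its average degree is strictly less than $2$), it suffices to bound $\frac{2|E(H)|}{|V(H)|}$ for an arbitrary planar graph $H$ of girth at least $10$, and then apply Theorem \ref{thm:starcol}.

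First I would reduce to the connected case with at least one cycle, so that Euler's formula $|V| - |E| + |F| = 2$ is available and every face boundary has length at least $g = 10$. Double counting incidences between edges and face boundaries then gives $2|E| \geq g|F|$, so $|F| \leq \frac{2|E|}{g}$. Substituting into Euler's formula and rearranging yields
\[
|V| - 2 \geq |E| \cdot \frac{g - 2}{g},
\]
whence
\[
\frac{2|E|}{|V|} \leq \frac{2g}{g-2}\left(1 - \frac{2}{|V|}\right) < \frac{2g}{g-2}.
\]
For $g = 10$ the right-hand side is $\frac{20}{8} = \frac{5}{2}$, so $\frac{2|E(H)|}{|V(H)|} < \frac{5}{2}$ holds for every such subgraph $H$, and hence $\Mad(G) < \frac{5}{2}$.

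The remaining step is to dispose of the edge cases when applying the bound to subgraphs $H \subseteq G$: if $H$ has no cycles then $\frac{2|E(H)|}{|V(H)|} < 2 < \frac{5}{2}$ trivially, and if $H$ is disconnected the same Euler argument applies component by component (or we note that the maximum in the definition of $\Mad$ is attained on a connected subgraph). Once $\Mad(G) < \frac{5}{2}$ is established, Theorem \ref{thm:starcol} immediately yields $\chi_s(G) \leq 4$.

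There is no real obstacle here; the only thing to be careful about is the strict inequality, which is preserved by the $-2$ term coming from Euler's formula (so the planarity, rather than just the girth, is what buys us the strictness needed to match the hypothesis of Theorem \ref{thm:starcol}). Consequently the corollary is a short computation once Theorem \ref{thm:starcol} is in hand.
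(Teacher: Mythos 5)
Your proof is correct and takes the same route as the paper: the paper states that a ``straightforward application of Euler's formula'' gives $\Mad(G) < \frac{2g}{g-2}$ for planar $G$ of girth at least $g$, and then invokes Theorem~\ref{thm:starcol} with $g=10$. You simply supply the standard Euler-formula computation (including the careful handling of acyclic or disconnected subgraphs and the source of the strict inequality) that the paper leaves implicit.
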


To prove Theorem~\ref{thm:starcol} we will use \Ifps, which were first introduced in \cite{ACKKR}.
A \emph{2-independent set} in $G$ is a set of vertices that have pairwise distance greater than 2.
An \emph{\Ifp} of a graph $G$ is a partition of $V(G)$ as $\I \sqcup \f$ where $\I$ is a 2-independent set in $G$ and $G[\f]$ is a forest.
Albertson \textit{et al.}~\cite{ACKKR} observed that if $G$ has an \Ifp{} $\I \sqcup \f$, then $\chi_s(G) \le 4$ because $\chi_s(T) \le 3$ for any tree $T\subseteq G[\f]$ and this 3-coloring of $G[\f]$ can be extended to all of $G$ by assigning the vertices in $\I$ a new color.  Note that the converse does not hold; for example, $\chi_s(K_{3,3}) = 4$, but $K_{3,3}$ has no $I,F$-partition.
Timmons~\cite{T} and Bu \textit{et al.}~\cite{BCMRW} showed that maximum average degree less than $\frac{7}{3}$ and $\frac{26}{11}$, respectively, imply the existence of an \Ifp, which in turn imply the abovementioned girth bounds sufficient for a planar graph to be star 4-colorable.
Along the same lines, Theorem~\ref{thm:starcol} is a consequence of the following theorem.

\begin{thm} \label{thm:mad} If $G$ is a graph with $\Mad(G) < \frac{5}{2}$, then $G$ has an \Ifp.
\end{thm}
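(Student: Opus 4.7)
The plan is a proof by contradiction combining reducibility analysis with discharging, where a potential function mediates between the two. Suppose $G$ is a counterexample minimum under a suitable well-ordering (e.g., lexicographic on $(|V(G)|,|E(G)|)$). The hypothesis $\Mad(G) < \tfrac{5}{2}$ is equivalent to saying the potential $\rho(H) := 5|V(H)| - 4|E(H)|$ is positive on every nonempty $H \subseteq G$; since $\rho$ is integer-valued, $\rho(H) \geq 1$ whenever $H \neq \emptyset$. This inequality will be used repeatedly to squeeze structural information out of local subgraphs.

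The first component is cataloguing reducible configurations. Because every proper subgraph of $G$ admits an $\Ifp$, any local structure $H$ such that an $\Ifp$ of $G - V(H)$ (or an appropriate contraction of $G$) can always be extended to $G$ cannot appear in $G$. Standard reductions handle low degree: a vertex of degree at most $1$ is absorbed into $\f$. The delicacy is that an $\Ifp$ has two simultaneous constraints, namely that $G[\f]$ be acyclic and that $\I$ be 2-independent in $G$ (not just independent), so re-inserting a vertex into $\I$ is blocked by any existing $\I$-vertex within distance $2$. I would expect to rule out adjacent $2$-vertices, long threads of $2$-vertices, and $3$-vertices sitting next to several $2$-vertices in restricted patterns.

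The potential function enters in a refined reducibility step. For a candidate local structure around a low-degree vertex, one considers the subgraph $H$ of $G$ of minimum potential containing it; since $\rho(H) \geq 1$ but tightly constrained, one extracts detailed information (many $2$-vertices, controlled adjacencies, small edge boundary) that lets one forbid entire \emph{clumps} of low-degree structure rather than individual configurations. This is where I expect the main technical obstacle to lie: when re-inserting several vertices at once into an $\Ifp$, one must simultaneously avoid creating cycles in $G[\f]$ and pairs of $\I$-vertices at distance $\leq 2$, and this bookkeeping is intricate.

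The final component is discharging. Assign initial charge $\mu(v) = d(v) - \tfrac{5}{2}$ to each vertex, so $\sum_v \mu(v) = 2|E(G)| - \tfrac{5}{2}|V(G)| < 0$ by hypothesis. Design rules redistributing charge from vertices of degree $\geq 3$ to nearby $2$-vertices, calibrated so that, in the absence of the reducible configurations established above, every vertex finishes with nonnegative charge. The reducibility lemmas must be strong enough that no $\geq 3$-vertex is asked to give away more than its initial surplus; achieving this balance is exactly what the potential-driven reducibility is engineered to support, and the resulting contradiction to $\sum_v \mu(v) < 0$ completes the proof.
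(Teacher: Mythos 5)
Your high-level architecture (minimal counterexample, potential function, reducible configurations, discharging to close) matches the paper's, but there is a load-bearing idea missing that the sketch cannot do without. The paper does not prove the theorem directly; it strengthens it to a statement about \emph{assigned graphs} whose vertex set is pre-partitioned as $I \sqcup F \sqcup U$, with potential $\p_G(H)=|I(H)|+4|F(H)|+5|U(H)|-4|E(H)|$, and proves that a positive-potential assigned graph has an \Ifp{} extending the preassignment (Theorem~\ref{thm:pot}; Theorem~\ref{thm:mad} is the special case $I=F=\emptyset$). This strengthening is what makes the reducibility step possible. When you delete a configuration $H$ and apply minimality to obtain an \Ifp{} of the remainder, an unfavorable choice of that partition need not extend to $G$: for instance, a bordering vertex landing in $\I$ can block re-inserting a deleted $2$-vertex into $\I$ and simultaneously threaten a cycle in $\f$. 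The paper escapes this by \emph{forcing} the boundary: it reassigns up to two boundary vertices from $U$ to $F$ in the smaller graph (Lemma~\ref{lem:move}) before invoking minimality, so the returned partition is guaranteed to have those vertices in $\f$. Justifying that reassignment preserves the positive-potential hypothesis is exactly why the potential is defined with different weights for $I,F,U$ rather than your uniform $5|V|-4|E|$, and is the content of Lemma~\ref{lem:kahuna}, which shows all proper subgraphs with $|V|+|E|\ge 5$ have potential at least $3$, giving slack for two unit decreases.

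Your phrase ``or an appropriate contraction of $G$'' gestures at a workaround, and contraction does handle one case in the paper (Claim~\ref{claim:2-F}), but it does not supply the forcing mechanism needed in Claims~\ref{claim:3v2t}--\ref{claim:4U}, where up to three bordering vertices must be pushed into $\f$ at once. Also note the quantifier issue in your reducibility framing: you write that a structure is reducible if an \Ifp{} of $G-V(H)$ ``can always be extended,'' but what the argument actually provides is that \emph{some} \Ifp{} of a suitably modified smaller graph extends; arranging for that ``some'' to exist is precisely what the assigned-graph machinery buys. Without introducing the $I\sqcup F\sqcup U$ strengthening (or something functionally equivalent), the reducibility lemmas you would need cannot be proved, and the discharging would have no reducible configurations to lean on. The discharging itself, with initial charge proportional to $d(v)-\tfrac{5}{2}$, is fine as far as it goes, though the paper's version also gives $I$- and $F$-vertices different base charges ($2d(v)-1$ and $2d(v)-4$) so that the total equals $-\p_G(G)$ exactly.
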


Theorem~\ref{thm:mad} is sharp in the sense that there are graphs with maximum average degree $\frac{5}{2}$ that do not have an \Ifp.  Indeed, given a cycle $C$, for each vertex $v$ in the cycle add a 3-cycle $a_v b_v c_v$ and the edge $v a_v$. To see that such a graph, which has maximum average degree $\frac{5}{2}$, does not have an \Ifp, simply note that no vertex $v$ on the cycle $C$ can be in the 2-independent set, as then $a_vb_vc_v$ would necessarily have to be in the forest $\f$, an impossibility.  However, this then implies that every vertex on $C$ must be in $\f$, which is also impossible.  Theorem \ref{thm:mad} therefore answers a question of Cranston and West~\cite[Problem 7.11]{CW}, the relevant part of which asks for the maximum bound on maximum average degree that guarantees an \Ifp.

To prove this result, we use the method of potentials as utilized by Kostochka and Yancey~\cite{KY,KY2}, Borodin, Kostochka and Yancey~\cite{BKY}, and Chen, Kim, Kostochka, West and Zhu~\cite{CKKWZ}.
We also strengthen the problem of finding an {\Ifp} by allowing some vertices to be initially assigned to $\I$ and $\f$, and modify the condition about maximum average degree to account for the preassigned vertices.

Going forward, if $X_1,\dots, X_t$ are a partition of a set $X$, then we will write $X=X_1\sqcup\dots\sqcup X_t$.  If $G$ is a graph with $V(G) = I \sqcup F \sqcup U$, we say that $G$ is an \emph{assigned graph}. If $H$ is a subgraph of an assigned graph $G$, then let $I(H) = I \cap V(H)$, $F(H) = F \cap V(H)$, and $U(H) = U \cap V(H)$.
Additionally, the \emph{potential} of $H$ in $G$, denoted $\p_G(H)$, is
\[
\p_G(H) = |I(H)| + 4|F(H)| + 5|U(H)| - 4|E(H)|.
\]
When the context is clear, we omit the subscript, and we use $\p(S)$ to mean $\p(G[S])$ when $S\subseteq V(G)$.

\begin{thm} \label{thm:pot}
Let $G$ be an assigned graph with vertex set partitioned as $I \sqcup F \sqcup U$.
If $\p_G(H) > 0$ for all nonempty subgraphs $H \subseteq G$, then $G$ has an \Ifp{} $\I \sqcup \f$ such that $I \subseteq \I$ and $F \subseteq \f$.
\end{thm}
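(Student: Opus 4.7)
The plan is to argue by minimum counterexample. Let $G$ be an assigned graph satisfying $\p_G(H) > 0$ for every nonempty $H \subseteq G$ but admitting no \Ifp{} extending the preassignment, chosen to minimize $|V(G)|$ and then $|E(G)|$. The goal is to derive a contradiction by exhibiting a nonempty subgraph of $G$ whose potential is nonpositive.

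The first main step is a sequence of structural lemmas ruling out reducible configurations in $G$. Easy base cases are immediate: an isolated vertex $v \in U$ may be placed in $\I$, and a degree-one vertex $v \in U$ may be placed in $\f$, with the inductive hypothesis applied to the smaller assigned graph $G - v$. More substantially, one should show that any proper nonempty subgraph $H \subsetneq G$ of sufficiently small potential is reducible in the following sense: one can construct a smaller assigned graph $G'$ by deleting $V(H)$ (and relocating some of its neighbors from $U$ into $F$ to protect the 2-independence of $\I$), verify that $\p_{G'}(H') > 0$ for every nonempty $H' \subseteq G'$, apply the inductive hypothesis to $G'$, and then combine the resulting \Ifp{} with a suitable local partition of $H$ to obtain an \Ifp{} of $G$. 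The coefficients $1, 4, 5, -4$ in $\p$ are designed precisely so this bookkeeping balances: each vertex moved from $U$ to $F$ costs one unit of potential in the quotient, while the deleted interior of $H$ supplies enough positive potential to compensate.

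Once all reducible configurations are eliminated, the second main step is discharging. Assign each vertex its potential coefficient as initial charge -- that is, $1$, $4$, or $5$ according to whether $v \in I$, $F$, or $U$ -- and each edge the charge $-4$, so that the total charge equals $\p_G(G) > 0$. Design local redistribution rules that push charge from dense or highly constrained regions toward low-degree vertices in $U$, and then verify that after discharging every vertex and edge carries nonpositive charge, contradicting the positive total.

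The main obstacle is calibrating the list of reducible configurations. The 2-independence constraint couples vertices at distance up to two, so each reduction must track how removing a piece of $G$ affects its second neighborhood in $G'$ and may force several $U$-vertices to be reassigned to $F$ (costing potential each time). Ensuring that the list is simultaneously strong enough for the discharging rules to drive every vertex to nonpositive charge, yet weak enough that every quotient graph $G'$ still satisfies the hypothesis $\p_{G'} > 0$, is where essentially all of the delicate work lies.
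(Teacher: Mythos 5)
Your outline matches the paper's architecture closely---minimal counterexample minimizing $|V(G)|+|E(G)|$, structural reductions, then discharging to force $\p_G(G)\leq 0$, and even the observation that the coefficients $1,4,5,-4$ are tuned so that reassignment arithmetic balances---but as you acknowledge in your closing paragraph, you have left all of the substantive work undone. Two gaps in particular keep this from closing.

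First, your general-reduction sketch for small-potential proper subgraphs $H$ is not right as stated. You propose to delete $V(H)$ and ``relocate some of its neighbors from $U$ into $F$,'' but reassignment to $F$ can only force a boundary vertex into $\f$; it cannot protect against $\f$-cycles that run through $H$ and the rest of $G$, and it is not sufficient to encode the effect of boundary vertices of $H$ landing in $\I$ versus $\f$. The paper's key technical lemma (that every proper subset $S$ with $|S|+|E(G[S])|\geq 5$ has $\p(S)\geq 3$) instead first applies minimality to $H$ itself (possibly after one reassignment) to learn which boundary vertices fall in $\I_{H_0}$ and which in $\f_{H_0}$, and then builds an auxiliary graph on $G-S$ by \emph{adding new gadget vertices} with prescribed assignments: a vertex $w\in I$ joined to the second neighborhood coming from $\I$-boundary vertices (to propagate the 2-independence constraint), and an edge $xy$ with $x\in I$, $y\in U$ joined to the second neighborhood coming from $\f$-boundary vertices (to rule out cross-cut $\f$-cycles). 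The potential arithmetic $\p_{G'}(W\cap X)<\p_{H_0}(S)$ on the gadget set $X=\{w,x,y\}$ is what makes the induction go through, and this mechanism does not appear in your plan. Without it you cannot prove the lemma you need to justify later reassignments of boundary vertices from $U$ to $F$ (the paper's Lemma~\ref{lem:move}), which in turn is needed to discharge the remaining configurations.

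Second, the discharging step is stated only in spirit. Your charge assignment (potential coefficients on vertices, $-4$ per edge, show all final charges nonpositive) is the sign-flipped version of the paper's $\ic(v)=2d(v)-\text{coeff}$ with $\fc(v)\geq 0$, which is fine, but the argument does not exist until the reducible configurations are fixed (no $3^+$-threads, no small-degree vertices in $F$, constraints on 3-vertices and 4-vertices in $U$ adjacent to threads without $I$-neighbors, etc.) and the discharging rules and per-vertex verification are carried out against them. As written, the proposal is a correct description of the strategy but not a proof.
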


%\derricknote{I replaced the crazy fraction with the potential-function statement above. It does not reveal the connection to maximum average degree right away, but it is a simpler statement with a natural weight function. This is the ``main'' theorem and Section 2 demonstrates the connection.}
% this is the old version!
%\begin{thm} \label{thm:pot}
%Let $G$ be a graph with vertex set partitioned as $I \sqcup F \sqcup U$. If
% \[ \max_{H \subseteq G} \frac{2(|E(G)|+11|I|+4|F|)}{9|I|+4|F|+|U|} < \frac{5}{2}, \]
% then $G$ has an {\Ifp} $V(G) = \I \sqcup \f$ such that $I \subseteq \I$ and $F \subseteq \f$.
%\end{thm}

Note that if $G$ is an assigned graph where two vertices in $I$ are adjacent or have a common neighbor, then there exists a subgraph with nonpositive potential.
Similarly, a cycle of vertices in $F$ form a subgraph of nonpositive potential.
Neither structure therefore appears as a subgraph of any graph satisfying the hypotheses of Theorem~\ref{thm:pot}.
Additionally, adding edges between vertices in a subgraph of $G$ only decreases the potential.
Thus we need only consider induced subgraphs when minimizing the potential across all subgraphs of $G$.

In Section~\ref{sec:equiv}, we demonstrate that Theorem~\ref{thm:mad} and Theorem~\ref{thm:pot} are equivalent.  Hence
Theorems~\ref{thm:mad} and \ref{thm:starcol} and Corollary~\ref{cor:planar} all follow from the proof of Theorem~\ref{thm:pot}, which appears in Section \ref{sec:main}. 
Section~\ref{sec:lemmas} contains the lemmas we will use to proof Theorem~\ref{thm:pot}.
We conclude this section with some further notation.
%%%%%%%%%%%%%%%%%%%%%%%%%%%%%%%%%%%%	Notation
%\section{Notation and Preliminaries}

A $k$-vertex is a vertex of degree $k$ and a $k^+$-vertex is a vertex of degree at least $k$.
For a vertex $v$, $N(v)$ is the neighborhood of $v$.  We reserve $I,F,$ and $U$ as sets of the vertex partition of an assigned graph.  For vertex sets, an overbar indicates the vertex complement, for example $\bar{I}$ is $F \cup U$.

An \Ifp{} $\I\sqcup\f$ \emph{extends} an assignment $I \sqcup F \sqcup U$ if $I \subseteq \I$ and $F \subseteq \f$. For an assigned graph $H$, we say $H$ has an \Ifp{} only if $H$ has an \Ifp{} that extends $I \sqcup F \sqcup U$.
For an \Ifp{} of $H$, let $H_{\f}$ be the subgraph of $H$ induced by vertices assigned to $\f$ and let $H_{\I}$ be the subgraph of $H$ induced by vertices assigned to $\I$.

%%%%%%%%%%%%%%%%%%%%%%%%%%%%%%%%%%%%	Equivalence

\section{Proof That Theorem~\ref{thm:mad} and Theorem~\ref{thm:pot} are Equivalent} \label{sec:equiv}

In this section, we demonstrate that Theorems~\ref{thm:mad} and \ref{thm:pot} are equivalent, and in the process demonstrate some of the rationale that led to our definition of the potential function $\rho$.
Note that  if $G$ is an assigned graph and $H \subseteq G$, then straightforward arithmetic shows that $\p(H) > 0$ is equivalent to
\[
\frac{2|E(H)|+22|I(H)|+8|F(H)|}{|U(H)|+9|I(H)|+4|F(H)|} < \frac{5}{2}.
\]
Therefore, Theorem~\ref{thm:pot} implies Theorem~\ref{thm:mad} by taking the sets $I$ and $F$ to be empty.

To establish the converse, let us assume $G$ is an assigned graph with the vertex partition $I \sqcup F \sqcup U$ and $\min\limits_{\emptyset \neq H\subseteq G}\rho(H) > 0$, hence
\[
\max\limits_{\emptyset \neq H \subseteq G} \frac{2|E(H)|+22|I(H)|+8|F(H)|}{|U(H)|+9|I(H)|+4|F(H)|} < \frac{5}{2}.
\]

Let $F = \{ u_1,\dots,u_k\}$ and $I = \{ v_{k+1},\dots, v_\ell \}$.
Starting with $G_0 = G$, iteratively build an auxiliary assigned graph $G_i$ for $i \in \{1,\dots,\ell\}$.
If $i \in \{1,\dots,k\}$, then build $G_i$ by adding a 3-cycle $abc$ and the edge $u_ia$ to $G_{i-1}$, removing $u_i$ from $F$, and adding $u_i, a, b, c$ to $U$; we call this an \emph{$F$-gadget} (see Figure~\ref{fig:fGad}).
If $i \in \{k+1,\dots,\ell\}$, then build $G_i$ by adding 3-cycles $abc$ and $fgh$, a path $adef$, and the edges $v_id$ and $v_ie$ to $G_{i-1}$, removing $v_i$ from $I$, and adding $v_i, a, b, c, d, e, f, g, h$ to $U$; we call this an \emph{$I$-gadget} (see Figure~\ref{fig:iGad}).
Note that the final graph $G_\ell$ has $U(G_\ell) = V(G_\ell)$ and, consequently, potential function $\p_{G_\ell}(H) = 5|V(H)|-4|E(H)|$ for all subgraphs $H \subseteq G_\ell$.

\begin{figure}[!htb]
\centering
\begin{subfigure}[b]{.4\textwidth}
\centering
\begin{tikzpicture}[scale=.5]
\tikzstyle{every node}=[draw,circle,fill=black,minimum size=3pt,
                            inner sep=0pt]
\node[label=left:$u_i$] (1) at (0,0) {};
\node[label=below left:$a$] (2) at (0,2) {};
\node[label=left:$b$] (3) at (-1,3) {};
\node[label=right:$c$] (4) at (1,3){};
\draw (1) -- (2) (2) -- (3) (2) -- (4) (3) -- (4);
\end{tikzpicture}
\caption{\label{fig:fGad}An $F$-gadget forces $u_i$ to be assigned $\f$ in an \Ifp.}
\end{subfigure}
\quad
\begin{subfigure}[b]{.4\textwidth}
\centering
\begin{tikzpicture}[scale=.5]
\tikzstyle{every node}=[draw,circle,fill=black,minimum size=3pt,
                            inner sep=0pt]
\node [label=left:$v_i$] (5) at (0,0) {};
\node[label=right:$d$](6) at (1,1) {};
\node[label=left:$e$] (7) at (-1,1) {};
\node[label=below right:$a$] (8) at (2,2) {};
\node[label=below left:$f$](9) at (-2,2) {};
\node[label=right:$b$] (10) at (3,3) {};
\node[label=left:$c$] (11) at (1,3) {};
\node[label=left:$g$] (12) at (-3,3) {};
\node[label=right:$h$] (13) at (-1,3) {};
\draw (5) -- (6) -- (7) (5) -- (7) (6) -- (8) (7) -- (9) (8) -- (10) (8) -- (11) (10) -- (11) (12) -- (9) -- (13) -- (12);
\end{tikzpicture}
\caption{\label{fig:iGad}An $I$-gadget forces $v_i$ to be assigned $\I$ in an \Ifp.}
\end{subfigure}
\caption{\label{fig:gadgets}The $F$- and $I$-gadgets.}
\end{figure}
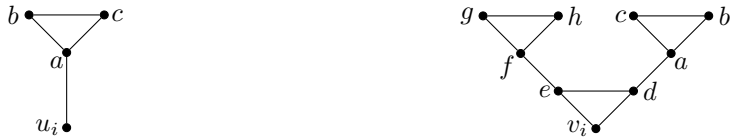

\begin{claim}\label{claim:gadgets}
For all $i \in \{0,1,\dots,\ell\}$ and every subgraph $H \subseteq G_i$, $\p_{G_i}(H) > 0$.
Hence $\Mad(G_\ell) < \frac{5}{2}$.
\end{claim}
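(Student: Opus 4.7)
The plan is to proceed by induction on $i$. The base case $i = 0$ is precisely the hypothesis on $G = G_0$. For the inductive step, fix $i \geq 1$, assume $\p_{G_{i-1}}(H') > 0$ for every nonempty subgraph $H' \subseteq G_{i-1}$, and let $H$ be a nonempty induced subgraph of $G_i$ (restricting to induced subgraphs loses nothing, since adding edges only lowers $\p$). Partition $V(H) = V' \sqcup V^*$, where $V' = V(H) \cap V(G_{i-1})$ and $V^*$ collects the new gadget vertices in $V(H)$, and let $H' = H[V']$. Unwinding the definition of $\p$ yields
\[
\p_{G_i}(H) \;=\; \p_{G_{i-1}}(H') + \Delta,
\]
where $\Delta$ records the contribution of new vertices in $V^*$ ($+5$ each), of the gadget edges of $H$ ($-4$ each), and of the reassignment of $u_i$ or $v_i$ from $F$ or $I$ to $U$. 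A direct calculation gives $\Delta_F = [u_i \in V(H)] + 5|V^*| - 4 e_g$ for the $F$-gadget and $\Delta_I = 4[v_i \in V(H)] + 5|V^*| - 4 e_g$ for the $I$-gadget, where $e_g$ is the number of gadget edges induced on $V(H)$.

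The core combinatorial step is then to show $\Delta \geq 0$ in every case, with $\Delta \geq 1$ whenever $V' = \emptyset$. For the $F$-gadget this is a quick enumeration over the $2^4$ subsets of $\{u_i, a, b, c\}$; the only case with $\Delta_F = 0$ is $\{u_i, a, b, c\} \subseteq V(H)$, and there $u_i \in V'$ forces $H' \neq \emptyset$. For the $I$-gadget the enumeration is larger, but the symmetry $(a,b,c,d) \leftrightarrow (f,g,h,e)$ of the gadget cuts the work roughly in half, and one verifies incrementally (adding vertices one at a time from the center outward, tracking the change $5 - 4k_w$ per added vertex $w$ with $k_w$ gadget neighbors already present) that $\Delta_I \geq 0$, with equality precisely when $V(H)$ contains the entire gadget (in which case $v_i \in V'$ again). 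Combining these facts: if $V' \neq \emptyset$ then $\p_{G_{i-1}}(H') \geq 1$ by induction and $\Delta \geq 0$ yields $\p_{G_i}(H) \geq 1 > 0$, while if $V' = \emptyset$ then $\p_{G_i}(H) = \Delta \geq 1 > 0$.

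The main obstacle I anticipate is simply the bookkeeping for the $I$-gadget, since its nine vertices admit many induced subconfigurations; however, the structure (two pendant triangles hanging off a triangle through $v_i$) together with the symmetry above keeps this tractable, and any induced sub-gadget missing a triangle edge enjoys a strict margin of at least $1$ in $\Delta_I$. Once the claim is established by induction, the second assertion is immediate: in $G_\ell$ every vertex lies in $U$, so $\p_{G_\ell}(H) = 5|V(H)| - 4|E(H)|$ for every subgraph $H$, and $\p_{G_\ell}(H) > 0$ for all nonempty $H$ rearranges to $\Mad(G_\ell) < \tfrac{5}{2}$.
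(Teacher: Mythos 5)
Your proof is correct and follows essentially the same route as the paper: induction on $i$, with the inductive step isolating the old part $H'$ of the subgraph from the new gadget material, and comparing $\p_{G_i}(H)$ to $\p_{G_{i-1}}(H')$ via the contribution of the new vertices, new edges, and the reassignment of $u_i$ or $v_i$. The one stylistic difference is that the paper picks a minimizer $H'$ and argues that minimality forces the entire gadget to be contained before running the same accounting, whereas you skip that reduction and instead directly show $\Delta\ge 0$ (and $\Delta\ge 1$ when $V'=\emptyset$) by enumerating gadget subsets. Your version is arguably a touch cleaner since the paper's "minimality implies the full gadget is present" step is asserted rather than justified; both reduce to the same integer bookkeeping, e.g.\ $1-4+\p_{G_i}(abc)=0$ for the $F$-gadget and $4+5\cdot 8-4\cdot 11=0$ for the full $I$-gadget, with strictly positive margins in all other cases.
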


\begin{proof}[Proof of Claim~\ref{claim:gadgets}]
We proceed by induction on $i$.  Observe that the case $i = 0$ holds by assumption, and suppose that $0 \leq i < \ell$ and $\p_{G_i}(H) > 0$ for all $H \subseteq G_i$. Select $H' \subseteq G_{i+1}$ such that $\p_{G_{i+1}}(H')$ is minimum.  Note that if $H' \subseteq G_i$ then as all of the elements in each gadget are in $U$,  $\p_{G_{i+1}}(H') \geq \p_{G_i}(H') > 0$.
Hence we may assume $H'\not\subseteq G_i$.

\noindent\textit{Case 1}: $i + 1 \in \{1,\dots, k\}$.
In this case, an $F$-gadget with vertices $a, b, c$ was connected to $u_i$.
Since $H' \not\subseteq G_i$, we have $V(H') \cap \{u_i,a,b,c\} \neq \emptyset$.  We claim that, in fact, the minimality of $\p_{G_{i+1}}(H')$ implies that the entire $F$-gadget added at stage $i+1$ must be contained in $H'$. Let $H''$ be the maximal subgraph of $H'$ contained in $G_i$, which by the induction hypothesis has positive potential in $G_i$.  Note that the subgraph of minimum potential within the triangle $abd$ is infact the triangle itself, which has potential 3.  Thus, taking into account that reassigning $u_{i+1}$ increases the potential of any subgraph containing $u_{i+1}$ by 1, and that the edge $u_{i+1}a$ reduces the potential of any subgraph by 4, we have that \[
\p_{G_{i+1}}(H') \ge \p_{G_i}(H'') + 1 - 4 + \p_{G_i}(abc) = \p_{G_i}(H'')>0.
\]
\smallskip
\noindent\textit{Case 2}: $i + 1 \in \{k+1,\dots, \ell\}$.
In this case, an $I$-gadget with vertices $a, b, c, d, e, f, g, h$ was connected to $v_{i+1}$.  We again claim that the minimality of $\p_{G_{i+1}}(H')$ implies that the entire $I$-gadget added at stage $i+1$ must be contained in $H'$.  This assertion follows by a similarly straightforward analysis to that used in Case 1.
\end{proof}

\noindent By Theorem~\ref{thm:mad} and Claim~\ref{claim:gadgets}, $G_\ell$ has an \Ifp{} $\I \sqcup \f$.
Given an $F$-gadget $\{u_i,a,b,c\}$ in $G_\ell$, one of the vertices $a,b,c$ must be in $\I$, which in turn forces $u_i$ to be in $\f$.  Hence $F \subseteq \f$.  Similarly, for any $I$-gadget $\{v_i,a,\dots,h\}$ in $G_\ell$ one of $a,b$ or $c$ and one of $f,g$ or $h$ must be in $I$, implying that $d$ and $e$ must be in $F$.  Consequently, $v_i\in \I$, as desired.  Hence, we obtain an {\Ifp} of $G$ extending $F\sqcup I\sqcup U$, so that Theorem \ref{thm:mad} implies Theorem \ref{thm:pot}, as desired.

%%%%%%%%%%%%%%%%%%%%%%%%%%%%%%%%%%%%	Structure of a Minimal Counterexample to Theorem~\ref{thm:pot}
\section{Some Useful Claims}\label{sec:lemmas}

We now begin our proof of Theorem~\ref{thm:pot}.
For the sake of contradiction, suppose there exists an assigned graph $G$ with $V(G) = I \sqcup F\sqcup U$ such that $\p_G(H) > 0$ for all $H \subseteq G$, yet $G$ has no \Ifp{} extending $I \sqcup F \sqcup U$.
Among such counterexamples, select $G$ to minimize $|V(G)| + |E(G)|$.
We use $H' \prec H$ to indicate that $|V(H')| + |E(H')| < |V(H)| + |E(H)|$.

In this section, we use our minimality assumptions to refine the structure of $G$.
The proof of Theorem~\ref{thm:pot} will then be completed in Section~\ref{sec:main} using the discharging method.

An \emph{$\ell$-thread} is a path $P$ of $\ell$ vertices in $U$ that have degree $2$ in $H$ such that the neighbors of the endpoints of $P$ in $H-P$, which we say \emph{border} the thread $P$, are either $3^+$-vertices or are in $I \cup F$.
An $\ell^+$-thread is a thread with at least $\ell$ internal 2-vertices in $U$.
Define an \emph{open} thread to be a thread with 2 bordering vertices and a \emph{closed} thread to be a thread with 1 bordering vertex.
In counting the number of threads incident with a vertex, open threads contribute once to the count and closed threads contribute twice.

We include the proof of the following claim for completeness.

\begin{claim}[Timmons~\cite{T}, Bu \emph{et. al.}\cite{BCMRW}]\label{claim:prevredconfig}
None of the following appear in $G$:
\begin{enumerate}[(C1)]
\item A 1-vertex in $U$.
\item A $3^+$-thread.
\item A $4$-vertex in $U$ incident to four 2-threads.
\end{enumerate}
\end{claim}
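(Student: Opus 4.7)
The plan for Claim~\ref{claim:prevredconfig} is a uniform reduction-and-extension argument for each of (C1)--(C3): assume the forbidden configuration occurs in $G$, delete a carefully chosen subset of its vertices to form a subgraph $G'$, apply the minimality of $G$ to obtain an \Ifp{} $\I'\sqcup\f'$ of $G'$ extending its inherited assignment, and then extend $\I'\sqcup\f'$ to an \Ifp{} of $G$, contradicting the choice of $G$. In each case $G'$ inherits the positive-potential hypothesis immediately because every subgraph of $G'$ is a subgraph of $G$; and the deleted vertices have too few $V(G')$-neighbors to create any new path of length at most $2$ among vertices of $V(G')$, so the $2$-independence of $\I'$ in $G'$ automatically carries over to $\I'$ in $G$.

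For (C1), take $v$ to be a $1$-vertex in $U$ and set $G'=G-v$. After obtaining an \Ifp{} of $G'$, place $v\in\f$; as a leaf, $v$ creates no cycle in $\f$, and placing $v\in\f$ has no effect on $2$-independence. For (C3), let $v$ be a $4$-vertex in $U$ incident to four $2$-threads, and for $i\in\{1,2,3,4\}$ denote the interior vertices of the $i$-th thread by $a_i$ (adjacent to $v$) and $b_i$ (adjacent to the outer border $c_i$); set $G'=G-(\{v\}\cup\{a_i,b_i:1\le i\le 4\})$ and, after obtaining an \Ifp{} of $G'$, place $v\in\I$ and all $a_i,b_i\in\f$. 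The distance-$\le 2$ neighborhood of $v$ in $G$ is exactly $\{v\}\cup\{a_i,b_i\}$, all freshly assigned compatibly with $v\in\I$, so $\I'\cup\{v\}$ is $2$-independent in $G$. Each $a_i$ becomes a leaf of $\f$ (its sole $\f$-neighbor being $b_i$, since $v\in\I$), and each $b_i$ has $\f$-neighbors only among $\{a_i,c_i\}$ with $a_i$ a leaf, so no cycle crosses between distinct $2$-threads or between a $2$-thread and $\f'$.

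For (C2), suppose $v_1v_2v_3$ is a $3$-thread with borders $x$ and $y$ (the argument for longer threads being analogous); set $G'=G-\{v_1,v_2,v_3\}$ and extend the \Ifp{} $\I'\sqcup\f'$ of $G'$ by cases on the partition assignments of $x$ and $y$. If at least one of $x,y$ lies in $\I'$, or if $x,y\in\f'$ lie in distinct components of $\f'$, place all of $v_1,v_2,v_3\in\f$: the only new $\f$-edges form at most a single path among $\{x,v_1,v_2,v_3,y\}$, creating no cycle. The remaining subcase is $x,y\in\f'$ in a common $\f'$-component; here place $v_2\in\I$ and $v_1,v_3\in\f$, so that $v_1$ and $v_3$ become leaves of $\f$ (attached respectively to $x$ and $y$), breaking any thread-induced $\f$-cycle. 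Moreover, the distance-$\le 2$ neighborhood of $v_2$ in $G$ is exactly $\{v_1,v_3,x,y\}$, none of which lies in $\I$ under this choice, so $\I'\cup\{v_2\}$ remains $2$-independent.

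The main obstacle I anticipate is the same-component subcase of (C2): the reduction itself is natural, but one must simultaneously verify that sacrificing $v_2$ to $\I$ breaks every possible $\f$-cycle through the thread and that $v_2$'s distance-$\le 2$ neighborhood in $G$ contains no existing $\I$-vertex. Configurations (C1) and (C3) are essentially immediate once the correct reductions and forced extensions are written down.
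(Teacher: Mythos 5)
Your reductions and extensions for (C1) and (C3) are the same as the paper's. (The paper additionally remarks that in (C3) the incident $2$-threads may be closed or share border vertices, but your argument handles these without change, since each $a_i$ is a leaf of the extended $\f$ regardless.)

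For (C2) you carry out the same deletion as the paper (remove the middle thread vertex and its two thread-neighbors), but your case analysis and verification are written only for a $3$-thread, and a $4^{+}$-thread is \emph{not} a $3$-thread: a sub-path of three interior vertices has a degree-$2$ thread vertex, not a border vertex, as one endpoint's outside neighbor, so the definition of a $3$-thread is not met. Hence ``the argument for longer threads is analogous'' is exactly the step that needs to be filled in. The paper avoids the issue by taking $v$ to be the middle of any three consecutive thread vertices and letting $a,b$ be whatever vertices lie at distance $2$ from $v$ in $G$ (they may be interior to a longer thread, and may satisfy $a=b$); the only case split is whether at least one of $a,b$ lands in $\I$. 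If yes, all three deleted vertices go to $\f$ and the added $\f$-path is pendant; if no, $v$ goes to $\I$ and its two neighbors to $\f$, which produces no $\f$-cycle because the two added $\f$-edges are pendant at $a$ and $b$ separately, \emph{regardless} of whether $a$ and $b$ lie in the same $\f$-component. That simpler dichotomy also makes your distinct-versus-same-component subcase superfluous, and it applies verbatim to all $3^{+}$-threads, which your version does not.
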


\begin{figure}[!htb]
\centering
	\begin{subfigure}[b]{.32\textwidth}
	\centering
	\begin{tikzpicture}
		\draw [white] (-1.5,-1.5) rectangle (1.5,1.5);
		\draw (0,0) -- (1,0);
		\fill (1,0) circle (2pt) node[right] {\scriptsize$v$};
		\draw [black,fill=white] (0,0) circle (2pt);
	\end{tikzpicture}
	\caption{The subgraph described in (C1).}
	\label{fig:c1}
	\end{subfigure}
	\begin{subfigure}[b]{.32\textwidth}
	\centering
	\begin{tikzpicture}
		\draw [white] (-.5,-1.5) rectangle (2.5,1.5);
		\draw (0,0) -- (2,0);
		\foreach \x/\y in {0/0,2/0}
			{\draw [black,fill=white] (\x,\y) circle (2pt);}
		\foreach \x/\y in {.5/0, 1.5/0, 1/0}
			{\fill (\x,\y) circle (2pt);}
		\node[above] at (1,0) {\scriptsize$v$};
		\node[above] at (0,0) {\scriptsize$a$};
		\node[above] at (2,0) {\scriptsize$b$};
	\end{tikzpicture}
	\caption{The subgraph described in (C2).}
	\label{fig:c2}
	\end{subfigure}
	\begin{subfigure}[b]{.32\textwidth}
	\centering
	\begin{tikzpicture}
		\draw (-1.5,0) -- (1.5,0) (0,-1.5) -- (0,1.5);
		\foreach \x/\y in { 0/0, -1/0, -.5/0, .5/0, 1/0, 0/-.5, 0/-1, 0/.5, 0/1}
			{ \fill (\x,\y) circle (2pt); }
		\foreach \x\y in {0/-1.5, -1.5/0, 1.5/0, 0/1.5}
			{\draw [black,fill=white] (\x,\y) circle (2pt);}
		\node[above right] at (0,0) {\scriptsize$v$};
		\end{tikzpicture}
	\caption{A subgraph described in (C3).}
	\label{fig:c3}
	\end{subfigure}
\caption{Subgraphs described in Claim~\ref{claim:prevredconfig} where all solid vertices are in $U$.}
\label{fig:prevredconfig}
\end{figure}
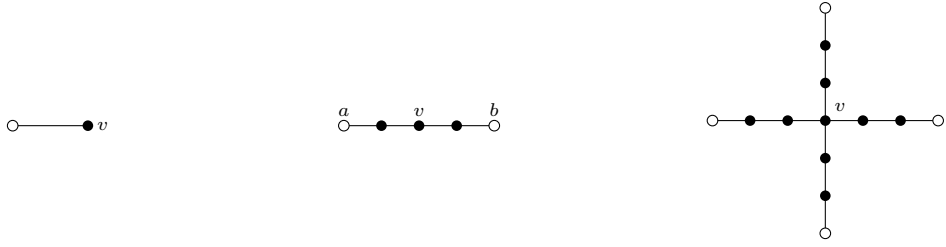
\begin{proof}
Suppose (C1) appears in $G$ as shown in Figure~\ref{fig:c1} where $v \in U$.
Then $G-v \prec G$.
By the minimality of $G$, $G-v$ therefore has an \Ifp{} $\I\sqcup\f$.
Extend $\I\sqcup\f$ to $G$ by assigning $v$ to $\f$.
Doing so does not decrease the distance in $G$ between vertices of $G_{\I}$ and does not create a cycle in $G_{\f}$ since $d(v)=1$.
Thus, this extension is an \Ifp{} of $G$, contradicting the choice of $G$.

Next, suppose (C2) appears in $G$ as shown in Figure~\ref{fig:c2}.
Note that it is possible that $a$ or $b$ are internal to a larger thread containing $v$, or that $a = b$.
Obtain $G'$ from $G$ by deleting $v$ and its neighbors, and note that $G'\prec G$, which implies $G'$ has an \Ifp{} $\I\sqcup\f$ extending $I \sqcup F \sqcup U$.

If at least one of $a$ or $b$ is in $G_{\I}$, then assigning the deleted vertices to $\f$ does not create an $\f$-cycle.
Otherwise, $v$ is at least distance 3 from a vertex in $\I$.
Thus assigning $v$ to $\I$ and the neighbors of $v$ to $\f$ preserves the distance requirement for vertices in $\I$ and does not introduce any $\f$-cycles.
In either case, $\I\sqcup\f$ extends to an \Ifp{} of $G$, again a contradiction.

Finally, assume that (C3) appears in $G$ with 4-vertex $v$.
See Figure~\ref{fig:c3}, although note that we neither assume that the threads incident to $v$ are open, nor that the boundary vertices of these threads are distinct.
The graph $G'$ obtained by deleting $v$ and its incident threads satisfies $G' \prec G$, and once again has an \Ifp{} $\I\sqcup\f$ by the minimality of $G$.
Notice that $v$ is distance at least 3 from any vertex in $\I$, so
assigning $v$ to $\I$ and the other deleted vertices to $\f$ extends $\I\sqcup\f$ to an {\Ifp} of $G$, the final contradiction necessary to complete the claim.
\end{proof}

Before proceeding to our key lemmas, we have the following claims about cut sets in $G$ and the structure of small sets of small potential. For $S\subseteq V(G)$, an \emph{$S$-lobe} of $G$ is an induced subgraph of $G$ whose vertex set consists of $S$ and the vertices of some component of $G-S$.

\begin{claim}\label{claim:cutset}
If $R \subseteq I$, then $G - R$ is connected.
\end{claim}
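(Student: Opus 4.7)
The plan is to argue by contradiction, exploiting the minimality of $G$. Suppose some $R \subseteq I$ disconnects $G$, and write $V(G) = R \sqcup A \sqcup B$ where $A$ and $B$ are unions of components of $G-R$ with $A, B$ both nonempty. Consider the two $R$-lobes $G_1 = G[A \cup R]$ and $G_2 = G[B \cup R]$. Each is a proper induced subgraph of $G$, so $G_1, G_2 \prec G$. Since the potential of a subgraph depends only on that subgraph and its inherited partition, every subgraph of $G_i$ inherits positive $\p$ from $G$, and each $G_i$ inherits the vertex partition $I \sqcup F \sqcup U$ from $G$.

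Applying the minimality of $G$ to each $G_i$ yields an \Ifp{} $\I_i \sqcup \f_i$ extending the assignment on $G_i$. Because $R \subseteq I \subseteq \I_i$ for $i = 1, 2$, the forest parts satisfy $\f_1 \subseteq A$ and $\f_2 \subseteq B$, and so the merged sets $\I = \I_1 \cup \I_2$ and $\f = \f_1 \cup \f_2$ form a partition of $V(G)$ extending $I \sqcup F \sqcup U$. Since there are no edges between $A$ and $B$, the induced subgraph $G[\f]$ decomposes as $G[\f_1] \sqcup G[\f_2]$, which is a forest.

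The main obstacle is verifying that $\I$ is 2-independent in $G$, since a pair of vertices with one endpoint in each lobe is not controlled by either $\I_i$ individually. Suppose $x, y \in \I$ lie at distance at most $2$ in $G$. If both lie in $G_1$ or both in $G_2$, the 2-independence of the corresponding $\I_i$ is violated. Otherwise, without loss of generality $x \in A$ and $y \in B$; since there is no edge between $A$ and $B$, they must share a common neighbor $z$, and since every $A$--$B$ path in $G$ passes through $R$, we have $z \in R$. But then $z$ and $x$ both belong to $\I_1$ (as $R \subseteq \I_1$) and are adjacent in $G_1$, contradicting the 2-independence of $\I_1$. Hence $\I \sqcup \f$ is an \Ifp{} of $G$ extending $I \sqcup F \sqcup U$, contradicting the choice of $G$.
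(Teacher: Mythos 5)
Your proof is correct and follows essentially the same approach as the paper: split $G$ along the cut set $R \subseteq I$ into $R$-lobes, obtain an \Ifp{} of each by minimality, and merge them, using $R \subseteq I \subseteq \I_i$ to ensure the merged partition is consistent and 2-independent. The only cosmetic differences are that you group the components of $G - R$ into two blocks $A$ and $B$ rather than taking one lobe per component, and you spell out the distance-2 verification that the paper leaves as a one-line observation.
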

\begin{proof}
Otherwise, every $R$-lobe $G_i$ is a proper subgraph of $G$ and, by the minimality of $G$, there exists an \Ifp{} $\I_i \sqcup \f_i$ of $G_i$ with $I(G_i) \subseteq \I_i$ and $F(G_i) \subseteq \f_i$.
Consider $\I = \cup \I_i$ and $\f = \cup \f_i$.
Since $R\subseteq I$, $\I$ has no vertices within distance two and $\f$ contains no cycles.
Hence the partition $\I \sqcup \f$ is an \Ifp{} of $G$ extending $I \sqcup F \sqcup U$, a contradiction.
\end{proof}

\begin{claim} \label{claim:trivial}
If $H$ is an induced proper subgraph of $G$ with $\p(H)<3$ and $|V(H)|+|E(H)|\leq4$, then $H$ is one of the five assigned graphs shown in Figure~\ref{fig:trivial}.
\end{claim}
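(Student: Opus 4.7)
The plan is to enumerate the possible $H$ by the tuple $(i,f,u,e) = (|I(H)|, |F(H)|, |U(H)|, |E(H)|)$ subject to $i + f + u + e \le 4$, and determine for which tuples $0 < \rho(H) < 3$, where $\rho(H) = i + 4f + 5u - 4e$. The lower bound $\rho(H) > 0$ comes for free from the hypothesis on $G$. The key observation driving the case analysis is that an $F$-vertex contributes $+4$ and a $U$-vertex contributes $+5$ to $\rho$, while each edge offsets only $-4$; consequently, for $\rho < 3$ to hold, each $F$-vertex must be balanced by at least one incident edge and each $U$-vertex by at least two, and the budget $|V(H)| + |E(H)| \le 4$ then leaves very few configurations. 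In addition, because $I$ is a $2$-independent set of $G$, $H$ contains no $II$-edge.

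I split into three cases. If $f = u = 0$, then $\rho = i - 4e$, and combining $\rho > 0$ with $\rho < 3$ forces $e = 0$ and $i \in \{1, 2\}$, giving either the singleton $I$-vertex or a pair of non-adjacent $I$-vertices. If $u = 0$ and $f \ge 1$, then $\rho < 3$ requires $e \ge f$, and the size constraint $v + e \le 4$ together with $\rho > 0$ pins us down to $f = e = 1$ with $i \in \{1, 2\}$; since $H$ is simple and no $II$-edge may appear, the unique edge must be an $IF$-edge, yielding either the lone $IF$-edge ($\rho = 1$) or the $IF$-edge together with an isolated $I$-vertex ($\rho = 2$). Finally, if $u \ge 1$, the inequality $5u + 4f + i - 4e < 3$ combined with $v + e \le 4$ forces $u = 1$, $f = 0$, $e = 1$, and $i = 1$, producing the $IU$-edge with $\rho = 2$.

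Matching these tuples against Figure~\ref{fig:trivial} gives exactly the five assigned graphs claimed. I do not expect any deep obstacle: the argument is a finite case check, and the only thing to be careful about is discarding tuples whose arithmetic satisfies the bounds but that cannot arise as a simple graph (for example, $u = 2$, $e = 2$, $i = 0$ would require a multi-edge) or that would contradict the $2$-independence of $I$; both exclusions are immediate given how small $H$ is.
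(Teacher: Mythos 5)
Your proposal is correct and takes essentially the same approach as the paper: a finite case analysis over the tuple $(|I(H)|,|F(H)|,|U(H)|,|E(H)|)$ constrained by $0<\rho(H)<3$, $|V(H)|+|E(H)|\le 4$, simplicity, and the absence of $II$-edges. The paper organizes the cases by first observing $|E(H)|\in\{0,1\}$ and then bounding the total vertex contribution, while you organize by $(u,f)$; both routes are equivalent finite checks that land on the same five configurations.
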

\begin{figure}[!htb]
\centering
	\begin{subfigure}[b]{.19\textwidth}
	\centering
	\begin{tikzpicture}
		\fill (0,0) circle (2pt) node[above] {\scriptsize$I$};
	\end{tikzpicture}
	\caption{$\p(K_1)=1$}
	\end{subfigure}
	\begin{subfigure}[b]{.19\textwidth}
	\centering
	\begin{tikzpicture}
		\fill (0,0) circle (2pt) node[above] {\scriptsize$I$};
		\fill (1,0) circle (2pt) node[above] {\scriptsize$I$};
	\end{tikzpicture}
	\caption{$\p(2K_1)=2$}
	\end{subfigure}
\begin{subfigure}[b]{.19\textwidth}
	\centering
	\begin{tikzpicture}
		\draw (0,0) -- (1,0);
		\fill (0,0) circle (2pt) node[above] {\scriptsize$I$};
		\fill (1,0) circle (2pt) node[above] {\scriptsize$U$};
	\end{tikzpicture}
	\caption{$\p(K_2)=2$}
	\end{subfigure}
	\begin{subfigure}[b]{.19\textwidth}
	\centering
	\begin{tikzpicture}
		\draw (0,0) -- (1,0);
		\fill (0,0) circle (2pt) node[above] {\scriptsize$I$};
		\fill (1,0) circle (2pt) node[above] {\scriptsize$F$};
	\end{tikzpicture}
	\caption{$\p(K_2)=1$}
	\end{subfigure}
	\begin{subfigure}[b]{.19\textwidth}
	\centering
	\begin{tikzpicture}
		\draw (0,0) -- (1,0);
		\fill (0,0) circle (2pt) node[above] {\scriptsize$I$};
		\fill (1,0) circle (2pt) node[above] {\scriptsize$F$};
		\fill (2,0) circle (2pt) node[above] {\scriptsize$I$};
	\end{tikzpicture}
	\caption{$\p(K_2+K_1)=2$}
	\end{subfigure}
\caption{\label{fig:smallconfigs}The five induced proper subgraphs $H$ of $G$ with $\p(H)<3$ and $|V(H)|+|E(H)|\leq4$.}
\label{fig:trivial}
\end{figure}
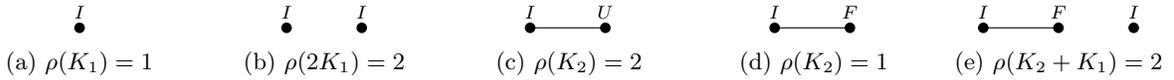

%\sout{$K_1$ in $I$, $2K_1$ with both vertices in $I$, $K_2$ with one vertex in $I$ and the other in $F$, $K_2$ with one vertex in $I$ and the other in $U$, or $K_1 + K_2$ with one vertex of $K_2$ in $F$ and the other vertices in $I$.}

\begin{proof}
Let $H$ be an induced proper subgraph of $G$ with $\p(H)<3$ and $|V(H)|+|E(H)|\leq4$.
If $|E(H)|=2$ then $|V(H)|+|E(H)|\geq5$, so $|E(H)|\in\{0,1\}$.
If $|E(H)|=0$, then as $\p(H)<3$, every vertex must be assigned to $I$ and there are either one or two such vertices as in Figures \ref{fig:smallconfigs}a and \ref{fig:smallconfigs}b.
Otherwise, if $|E(H)|=1$, then the combined potential of the (at least 2) vertices can be at most 6 and must be at least 5 since $\p(H)>0$, hence exactly one vertex of $H$ is in either $U$ or $F$.
Now, if one vertex is in $U$, then these conditions force $H$ to have exactly one other vertex in $I$ as depicted in Figure \ref{fig:smallconfigs}c.
Instead, if one vertex is in $F$, then $H$ has either one or two additional vertices in $I$.
As vertices in $I$ are necessarily nonadjacent, this leaves Figures \ref{fig:smallconfigs}d and \ref{fig:smallconfigs}e as the remaining feasible configurations.
\end{proof}

Lemma~\ref{lem:kahuna}, which we prove next, states that $G$ has no large sets of small potential.
This will be used in the proof of Lemma~\ref{lem:move} which gives hypotheses that allow us to reassign vertices from $U$ to $F$.

\begin{lem} \label{lem:kahuna}
Let $S$ be a proper subset of $G$ and let $H=G[S]$.
If $|S|+|E(H)|\geq5$, then $\p(S)\geq3$.
\end{lem}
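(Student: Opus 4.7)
The plan is to derive a contradiction with the minimality of $G$ via a contraction argument.  Suppose for contradiction that a proper subset $S \subsetneq V(G)$ satisfies $|S| + |E(G[S])| \geq 5$ while $\p(S) \leq 2$, and set $H = G[S]$.  Since $H \prec G$ and inherits the positive-potential hypothesis from $G$, the minimality of $G$ yields an \Ifp{} $\I_H \sqcup \f_H$ of $H$ extending $I(H) \sqcup F(H) \sqcup U(H)$.

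Next I would form a smaller assigned graph $G^*$ by contracting $S$ to a single new vertex $w$ placed in $U$, discarding any multi-edges that result.  Then $G^* \prec G$ because $|S| - 1 + |E(H)| \geq 4$, and $\p_{G^*}(T) > 0$ for every nonempty $T \subseteq V(G^*)$: when $w \notin T$, $\p_{G^*}(T) = \p_G(T)$; when $w \in T$, writing $T' = T \setminus \{w\}$ and combining $\p_G(T' \cup S) \geq 1$ with $\p(S) \leq 2$ yields
\[
\p_{G^*}(T) \;=\; 5 + \p_G(T') - 4 b_T \;\geq\; 4 + 4(e_T - b_T) \;\geq\; 4,
\]
where $b_T$ is the degree of $w$ in $G^*[T]$ and $e_T$ is the number of edges from $T'$ to $S$ in $G$ (so $e_T \geq b_T$).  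The minimality of $G$ then gives an \Ifp{} $\I^* \sqcup \f^*$ of $G^*$.

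The final step is to glue by setting $\I = \I_H \cup (\I^* \setminus \{w\})$ and $\f = \f_H \cup (\f^* \setminus \{w\})$, aiming to show this is an \Ifp{} of $G$ extending $I \sqcup F \sqcup U$ and thereby contradict the choice of $G$.  A key structural byproduct of $\p(S) \leq 2$, obtained by applying $\p_G(S \cup \{x\}) \geq 1$ and $\p_G(S \cup \{x_1, x_2\}) \geq 1$, is that every $x \in V(G) \setminus S$ has at most one neighbor in $S$ and that two such boundary vertices cannot be adjacent in $G$---a very restrictive boundary.  The \textbf{main obstacle} is that the 2-independence and forest requirements for $\I \sqcup \f$ in $G$ pull in opposite directions across the cases $w \in \I^*$ and $w \in \f^*$: when $w \in \I^*$, 2-independence of $\I$ follows automatically (any short cross-path in $G$ projects to a distance-2 pair involving $w$ in $G^*$), but cross-edges from $\f_H$ to the boundary of $\f^*$ threaten to introduce forest cycles; when $w \in \f^*$, the forest condition is manageable (each component of $G^*[\f^*] - w$ contains a unique boundary vertex, which prevents any excursion through $\f_H$ from closing into a cycle), but an adjacent pair $u \in \I_H$, $v \in \I^*$ across the boundary would violate 2-independence.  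I would resolve the obstacle in each case by re-selecting the \Ifp{} of $H$ to push the offending boundary vertex of $S$ out of $\I_H$ or out of $\f_H$ as needed, exploiting the flexibility that the low value of $\p(H)$ forces on the possible partitions of $H$.
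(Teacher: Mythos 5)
Your overall strategy---use minimality to get an \Ifp{} of the lobe $G[S]$, replace $S$ by a small gadget to obtain a second smaller graph, get its \Ifp{}, and glue---is exactly the paper's template, and several of your supporting observations are correct and in the paper: $H=G[S]\prec G$ inherits positive potential, each vertex outside $S$ has at most one neighbor in $S$, and two boundary vertices outside $S$ are nonadjacent. Your potential calculation for $G^*$ is also arithmetically sound.

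The gap is where you already sense it: the single-vertex contraction with $w\in U$ does not make the two \Ifp{}s compatible, and ``re-select the \Ifp{} of $H$'' is not a proof step. Concretely, the obstructions are real. If $w\in\f^*$, a boundary vertex $t\in\I_H$ adjacent across the cut to some $v\in\I^*$ gives two $\I$-vertices at distance $1$; nothing in the $G^*$ side forbids $v\in\I^*$, because in $G^*$ the vertex $v$ is merely adjacent to $w\in\f^*$. If $w\in\I^*$, take any boundary arc $P$ of a potential $\f$-cycle with endpoints $a,b$ adjacent to $S$; in $G^*$ this is just a path between two $\f^*$-neighbors of the $\I^*$-vertex $w$, which creates no cycle in $G^*[\f^*]$, so the forest condition in $G^*$ provides no contradiction. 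There is no guarantee $H$ has an alternative \Ifp{} that pushes the offending boundary vertices across, and low potential does not ``force flexibility''---if anything it constrains $H$.

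The paper resolves this by fixing the \Ifp{} of $H$ \emph{first} (after possibly reassigning one boundary vertex from $U$ to $F$ when $\p(S)=2$), and only then building the replacement graph $G'$ using a three-vertex gadget rather than one contracted vertex: $w,x\in I$ and $y\in U$ with the edge $xy$, where $w$ is joined to the outside neighbors of boundary vertices that landed in $\I_H$ (the set $N_\I$) and $y$ is joined to the outside neighbors of boundary vertices that landed in $\f_H$ (the set $N_\f$). Because $w,x\in I$ they are forced into $\I_{G'}$, and $y$ is forced into $\f_{G'}$ by its adjacency to $x$. This bakes the $\I/\f$ split of the boundary into $G'$: vertices of $N_\I$ cannot be in $\I_{G'}$ (adjacent to $w$), vertices of $N_\f$ lie in distinct components of $\f_{G'}-y$, and any distance-$2$ violation or $\f$-cycle in the glued partition of $G$ projects to one in $G'$, which the minimality of $G$ excludes. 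The potential bookkeeping for $G'$ is then a case analysis on a minimum-potential $W\subseteq V(G')$ rather than the clean one-line bound you have (your $w\in U$ version gives slack $+5$, but the paper's $w,x\in I$, $y\in U$ gadget does not). In short: right skeleton, but the single-vertex contraction cannot be repaired by re-choosing the lobe's \Ifp{}; you need a gadget that encodes the lobe's partition before you appeal to minimality on the quotient graph.
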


\begin{proof}
Suppose otherwise, so there is a proper subset $S$ with $|S| + |E(H)| \geq 5$ and $\p_G(S) < 3$.
Select such $S$ to minimize $\p_G(S)$, and recall that $\p_G(S)>0$.
Further, let $$T=\{v\in S \colon N(v)\cap \bar{S}\neq\emptyset\}.$$

By Claim~\ref{claim:cutset}, if $T \subseteq I$ then $S = T$. % lest $G-T$ is disconnected.
As vertices in $I$ are pairwise independent, we then would have that $|S|+|E(H)| = |S|\ge 5$ and thus that $\p_G(H)\ge 5$, a contradiction.
Therefore, at least one vertex in  $T$ is in $F\cup U$.

If $\p_G(S)=2$ and $T \cap F = \emptyset$, then modify the assigned graph $H$ to an assigned graph $H_0$ by changing a vertex of $T \cap U$ from $U$ to $F$.
Note that for all $S' \subseteq S$, we have $\p_{H_0}(S') \ge \p_H(S')-5+4=\p_H(S')-1$.
By the minimality of $\p_G(S)$, $\p_H(S') = \p_G(S') \geq 2$.
Thus $\p_{H_0}(S') \geq 1$ for all  $S' \subseteq S$.
Otherwise, let $H_0 = H$, and our assumptions guarantee $\p_{H_0}(S') \geq 1$ for all $S' \subseteq S$.
This alteration, if necessary, will aid in the construction of an auxiliary graph, which we describe below.

By the minimality of $G$, the fact that potentials are minimized by induced subgraphs, and $S \subsetneq V(G)$, there is an \Ifp{} $\I_{H_0} \sqcup \f_{H_0}$ of $H_0$ such that $I(H_0) \subseteq \I_{H_0}$ and $F(H_0) \subseteq \f_{H_0}$.
Let $N_{\I}$ be the set of vertices in $\overline{S}$ adjacent to a vertex $t \in T$ with $t \in \I_{H_0}$ and $N_{\f}$ be the set of vertices in $\overline{S}$ adjacent to a vertex $t \in T$ with $t \in \f_{H_0}$.
Note that $N_\I \cup N_\f \neq \emptyset$ by the definition of $T$.

Construct an auxiliary graph $G'$ by adding vertices to $G-S$ as follows.  If $N_\I \neq \emptyset$, add a new vertex $w$ that is adjacent to every vertex in $N_\I$.  If $N_\f\neq\emptyset$, then add adjacent vertices $x$ and $y$ and connect $y$ to each vertex in $N_{\f}$.
Add $w$ and/or $x$ to $I$, and $y$ to $U$ and let $X$ denote those of $w,x$ and $y$ that are added to $G'$.
See Figure~\ref{fig:G'} for a visual representation of $G'$.

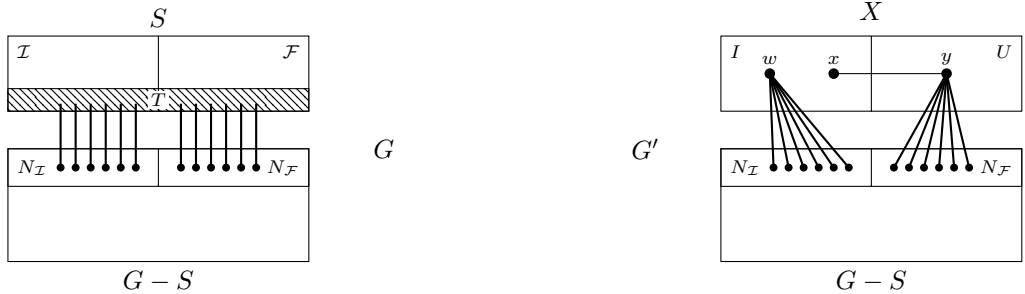
\begin{figure}[!htb]
\centering
	\begin{subfigure}[b]{.49\textwidth}
	\centering
	\begin{tikzpicture}
		\draw (1,1.) -- (1,2)
			(-1,1) rectangle (3,2);
		\node at (1,2)[above] {$S$};
		\draw [pattern=north west lines] (-1,1) rectangle (3,1.3);
		\node[color=black,fill=white,inner sep=1] at (1,1.15){\scriptsize$T$};
		\node at (3,2)[below left] {\scriptsize$\f$};
		\node at (-1,2)[below right] {\scriptsize$\I$};
		
		\draw (-1,.5) rectangle (3,-1);
		\node at (1,-1)[below] {$G-S$};
		\draw (1,0) -- (1,.5) (-1,0) rectangle (3,.5);
		\node at (-1,.25)[right] {\scriptsize$N_{\I}$};
		\node at (3,.25)[left] {\scriptsize$N_{\f}$};
		
		\foreach \x in {-.3,-.1,...,.7,1.3,1.5,...,2.3} {
			\draw[thick] (\x,1.1) -- (\x,.25);
			\fill (\x,.25) circle (1.5pt);}
		
		\node at (4,.5) {$G$};
	\end{tikzpicture}
	\caption{Vertex sets and some adjacencies of $G$.}
	\label{}
	\end{subfigure}
	\begin{subfigure}[b]{.49\textwidth}
	\centering
	\begin{tikzpicture}
		\draw (1,1.) -- (1,2)
			(-1,1) rectangle (3,2);
		\node at (1,2)[above] {{\color{white}[}$X${\color{white}]}};
		\node at (3,2)[below left] {\scriptsize$U$};
		\node at (-1,2)[below right] {\scriptsize$I$};
		
		\draw (-1,.5) rectangle (3,-1);
		\node at (1,-1) [below] {$G-S$};
		\draw (1,0) -- (1,.5) (-1,0) rectangle (3,.5);
		\node at (-1,.25)[right] {\scriptsize$N_{\I}$};
		\node at (3,.25)[left] {\scriptsize$N_{\f}$};
		
		\fill (-.35,1.5) circle (2pt) node[above] {\scriptsize$w$};
		\foreach \x in {-.3,-.1,...,.7}
			{\draw[thick] (-.35,1.5) -- (\x,.25);
			\fill (\x,.25) circle (1.5pt);}
		\fill (.5,1.5) circle (2pt) node[above] {\scriptsize$x$};
		\draw (.5,1.5) -- (2,1.5);
		\fill (2,1.5) circle (2pt) node[above] {\scriptsize$y$};
		\foreach \x in {1.3,1.5,...,2.3}
			{\draw[thick] (2,1.5) -- (\x,.25);
			\fill (\x,.25) circle (1.5pt);}
		
		\node at (-2,.5) {$G'$};
	\end{tikzpicture}
	\caption{Vertex sets and some adjacencies of $G'$.}
	\label{}
	\end{subfigure}
\caption{The construction of $G'$ from $G$ as described in Lemma~\ref{lem:kahuna}.}
\label{fig:G'}
\end{figure}

Observe the following statements about $G'$:
\begin{enumerate}[(Ob 1)]
\addtocounter{enumi}{-1}
\item $|N_G(v)\cap T|\leq1$ for all $v\in \overline{S}$, as otherwise $\p_G(S+v)\leq\p_G(S)+\p_G(v)-4\cdot 2\leq3+5-8= 0$, contradicting the hypothesis on $G$; hence $|N_G(v)\cap S| = |N_{G'}(v)\cap X|$.
\item $X$ is nonempty. % previoiusly: $X$ contains either $w$ or both $x$ and $y$. (it _can_ be both)
\item If $\p_{G}(S)=2$, then $T\cap F(H_0) \neq\emptyset$ by construction, and hence $\{x,y\} \subseteq X$.%$N_{\f}\neq \emptyset$
% This is not in our list, right? $\p(S) < 3$.
%\item if $\p(S)=3$, then $N_{\f}\neq\emptyset$ and $N_{\I}\neq\emptyset$ by construction
\end{enumerate}

Since $|X|+|E(G'[X])|\leq4$ and $|S| + |E(H)| \geq 5$, $G' \prec G$.
By the assignment of vertices in $X$ under the construction of $G'$, if $G'$ has an \Ifp{} $\I_{G'} \sqcup \f_{G'}$, then specifically $y \in \f_{G'}$ if $\{x,y\} \subseteq X$.
Observe that $(\I_{G'-X} \cup \I_{H_0})\sqcup (\f_{G'-X} \cup \f_{H_0})$ is an \Ifp{} of $G$ because an $\f$-cycle cannot be formed and the  construction of $X$ implies $\I$ is necessarily a 2-independent set.
Thus, by minimality of $G$, there is instead some $W\subseteq V(G')$ with $\p_{G'}(W) \leq 0$.
Select $W \subseteq V(G')$ to minimize $\p_{G'}(W)$.
Notice that if $W \cap X = \emptyset$, then $W\subset G$ and $\p_G(W) = \p_{G'}(W) \leq 0$, a contradiction.
We may therefore assume $W\cap X\neq \emptyset$.
Observe that $\p_{G'}(W\cap X) \geq 1$.

The minimality of $\p_{G'}(W)$ and the assignment of vertices in $X$ imply that if $W\cap N_{\I}\neq\emptyset$, then $w\in W$, and that if $W\cap N_{\f}\neq\emptyset$, then $\{x,y\}\subseteq W$.
Since every edge between $W\setminus X$ and $X$ in $G'$ corresponds to an edge between $W \setminus S$ and $S$ in $G$ by (Ob 0), we have
\[
0<\p_G(W-X+S)\leq\p_{G'}(W)-\p_{G'}(W\cap X)+\p_{H_0}(S).
\]
Since $\p_{G'}(W)\leq0$, it follows that
\begin{equation}\label{eqn:potW} \p_{G'}(W\cap X)<\p_{H_0}(S). \end{equation}

We have two cases to consider.
First, suppose that $\overline{S}\nsubseteq W$.
Define $S'=W-X+S$ and recall $\p_G(S')  \leq \p_{G'}(W) -\p_{G'}(W\cap X)+\p_G(S)$, which implies $\p_G(S')<\p_{H_0}(S) \leq \p_G(S)$ since $\p_{G'}(W) \leq 0$ and $\p_{G'}(W\cap X) > 0$.
Since $\overline{S}\nsubseteq W$, we have $S'\subsetneq V(G)$, which contradicts the minimality of $\p(S)$.

Now suppose $\overline{S}\subseteq W$.
By the minimality of $\p_{G'}(W)$, $W\cap X=X$ and hence $W = V(G')$.
Thus, $\p_{G'}(G') = \p_{G}(G) - \p_{G}(S) + \p_{G'}(X)$.
Since $\p_{G'}(X) \in \{1,2\}$, $\p_{G}(S) \in \{1,2\}$, $\p_{G}(G) > 0$, and $\p_{G'}(G') \leq 0$, we must have $\p_{G}(S) = 2$, and $\p_{G'}(X) = 1$.
However, by (Ob 2) we have that $\p_{G}(S)=2$ implies $\p_{G'}(X) \geq 2$, a contradiction.
\end{proof}

We can immediately use Lemma~\ref{lem:kahuna} to prove a claim about vertices in $F$.

\begin{claim}\label{claim:2-F}
In $G$, $F$ contains neither a 1-vertex nor a 2-vertex.
\end{claim}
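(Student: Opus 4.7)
I would prove the two parts of the claim separately, in each case deriving a contradiction with the minimality of $G$ by exhibiting a smaller assigned graph $G'$ satisfying the hypotheses of Theorem~\ref{thm:pot} and showing that any $I,F$-partition of $G'$ extends to one of $G$.

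\textbf{The 1-vertex case.} Suppose $v \in F$ has unique neighbor $u$, and consider $G' = G - v$ with the inherited assignment $I \sqcup (F \setminus\{v\}) \sqcup U$. Every subgraph of $G'$ is a subgraph of $G$ with the same vertex assignments, so $\rho_{G'}(H) = \rho_G(H) > 0$ for all nonempty $H \subseteq G'$. Since $G' \prec G$, by minimality $G'$ has an \Ifp{} $\I \sqcup \f$ extending the inherited assignment. Extend it to $G$ by placing $v$ into $\f$ (required since $v \in F$). Because $v$ has degree $1$, it is attached as a leaf to $G'_{\f}$, so $G_{\f}$ is still a forest; and any new path of length $\le 2$ between vertices of $\I$ in $G$ would have to pass through $v$ and therefore use its only neighbor $u$ twice, which is impossible. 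This produces an \Ifp{} of $G$, contradicting the choice of $G$.

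\textbf{The 2-vertex case.} Suppose now $v \in F$ has two neighbors $u_1, u_2$ (with $u_1 \ne u_2$, since $G$ is simple). Simply deleting $v$ is too weak: the resulting partition might place both $u_1$ and $u_2$ into $\f$ and connect them by an $\f$-path, so that reinserting $v$ into $\f$ would close a cycle. To rule this out, I would form $G'$ from $G - v$ by adding the edge $u_1u_2$ (if not already present), leaving all assignments unchanged. Observe first that $u_1, u_2$ cannot both lie in $I$, since otherwise the subgraph on $\{u_1, v, u_2\}$ with the two edges to $v$ would have $\rho_G \le -1 < 0$, contradicting the hypothesis on $G$. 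Note $G' \prec G$, since $|V(G')| < |V(G)|$.

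The key verification, which I regard as the main obstacle, is that $\rho_{G'}(H') > 0$ for every nonempty $H' \subseteq G'$. If $H'$ does not contain the new edge $u_1u_2$, then $H'$ is literally a subgraph of $G$ with the same assignment, so $\rho_{G'}(H') = \rho_G(H') > 0$. If $H'$ does contain $u_1u_2$, let $H$ be obtained from $H'$ by removing $u_1u_2$, adding $v$, and adding the two edges $u_1v, u_2v$; then $H \subseteq G$, and the bookkeeping $|F(H)| = |F(H')|+1$ and $|E(H)| = |E(H')|+1$ gives $\rho_G(H) = \rho_{G'}(H') + 4 - 4 = \rho_{G'}(H')$, so again $\rho_{G'}(H') > 0$. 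Hence $G'$ satisfies the hypotheses of Theorem~\ref{thm:pot}, and by minimality admits an \Ifp{} $\I \sqcup \f$ extending the inherited assignment.

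Finally I would extend $\I \sqcup \f$ to $G$ by placing $v \in \f$. Since $u_1u_2 \in E(G')$, at most one of $u_1, u_2$ lies in $\I$. If both lie in $\f$, then $u_1u_2$ is an edge of the forest $G'_{\f}$; deleting this edge and inserting the path $u_1 v u_2$ keeps $G_{\f}$ a forest. If only one, say $u_1$, lies in $\f$, then $v$ attaches to $G'_{\f}$ as a leaf of $u_1$, and $G_{\f}$ remains a forest. Any new $\I$-to-$\I$ path of length $\le 2$ in $G$ must pass through $v$ and hence use both $u_1$ and $u_2$ as $\I$-vertices, which we have excluded. So $\I \sqcup \f$ extends to an \Ifp{} of $G$, the desired contradiction.
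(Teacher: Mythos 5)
Your 1-vertex argument matches the paper's exactly. For the 2-vertex case, your construction of $G'$ (delete $v$, add the edge $u_1u_2$) is precisely the paper's construction restated: for a degree-2 vertex, contracting $uv$ and deleting $v$ while joining its two neighbors produce the same graph. Your potential bookkeeping ($\rho_G(H) = \rho_{G'}(H') + 4 - 4$) is correct and equivalent to the paper's.

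However, there is a genuine gap in the 2-vertex case. Your extension step ``deleting this edge and inserting the path $u_1 v u_2$'' silently assumes $u_1u_2 \notin E(G)$; your parenthetical ``(if not already present)'' acknowledges the possibility that $u_1u_2$ already belongs to $E(G)$ but does not handle it. If $u_1 u_2 \in E(G)$ and the \Ifp{} of $G'$ puts both $u_1, u_2$ into $\f$, then putting $v$ into $\f$ creates the triangle $u_1 u_2 v$ inside $G_\f$, which is not a forest, so the extension fails. The paper closes this off at the outset: if the two neighbors of $v$ were adjacent, the induced subgraph on $\{v, u_1, u_2\}$ has $|V| + |E| \geq 6$ and potential at most $4 + 5 + 5 - 12 = 2 < 3$, contradicting Lemma~\ref{lem:kahuna}. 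You should add this one sentence and drop the ``if not already present'' caveat; with that, your proof is complete and identical in substance to the paper's.
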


\begin{proof}
Let $v$ be a 1-vertex in $F$, hence $G-v \prec G$.
Also, $\p_{G-v}(H) = \p_G(H) > 0$ for all nonempty subgraphs $H \subseteq G-v$.
Consequently, $G-v$ has an \Ifp{} $\I \sqcup \f$ which can be extended to an \Ifp{} of $G$ by assigning $v$ to $\f$.

If, instead, $v$ is a 2-vertex in $F$, let $u$ be a neighbor of $v$.
If there is an edge between the neighbors of $v$, then the subgraph induced by $v$ and its neighbors has three vertices, three edges and potential at most 2.
This contradicts Lemma~\ref{lem:kahuna}, so the neighbors of $v$ are not adjacent.

Let $G'$ be the assigned graph formed from $G$ by contracting the edge $uv$ into a vertex labeled $uv$, and assign to $uv$ the assignment of $u$.
If $S \subseteq V(G')$ is a nonempty subset with $\p_{G'}(S) \leq 0$, then necessarily $uv \in S$.
Let $S' = (S \setminus \{ uv\}) \cup \{u,v\}$ and observe that $\p_{G}(S') = \p_{G'}(S')$, a contradiction.
Thus $\p_{G'}(S) > 0$ for all nonempty subsets $S \subseteq V(G')$, and since $G' \prec G$, the minimality of $G$ implies $G'$ has an \Ifp{} $\I \sqcup \f$.
Reversing the contraction does not decrease the distance in $G$ between vertices in $G_{\I}$, and $G_{\f}$ remains a forest after adding $v$ to $\f$ since there are no cycles in $G$ that are not in $G'$.
\end{proof}

Before proceeding to Lemma~\ref{lem:move}, note that by Claim~\ref{claim:trivial} a copy of $K_2$ with a vertex in $I$ and the other in $U$, as seen in Figure~\ref{fig:smallconfigs}c, is the only possible induced proper subgraph $H$ of $G$ with a vertex in $U$ that satisfies $\p(H)<3$ and $|V(H)|+|E(H)| \le 4$.

\begin{lem} \label{lem:move}
Let $S$ be a nonempty proper subset of $V(G)$.
If $G'$ is obtained from $G$ by reassigning up to two vertices $u$ and $v$ in $S$ from $U$ to $F$,
then $G'[S]$ has an \Ifp.
\end{lem}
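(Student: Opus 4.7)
The plan is to invoke the minimality of $G$ on the assigned graph $G'[S]$. Because $S$ is a proper subset of $V(G)$, we have $G'[S]\prec G$, so it suffices to show that $\p_{G'[S]}(H)>0$ for every nonempty subgraph $H\subseteq G'[S]$; the minimality of $G$ will then produce an \Ifp{} of $G'[S]$ extending its (reassigned) partition. By the earlier remark that potentials are minimized on induced subgraphs, I may restrict attention to induced $H$, and since $G$ and $G'$ share the same underlying graph, any such $H$ is also an induced proper subgraph of $G$.

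The key observation is that moving a vertex from $U$ to $F$ decreases its contribution to the potential by exactly $1$, so
\[
\p_{G'}(H)=\p_G(H)-k_H,\qquad\text{where }k_H=|V(H)\cap\{u,v\}|\in\{0,1,2\}.
\]
If $|V(H)|+|E(H)|\geq 5$, then Lemma~\ref{lem:kahuna} applied to $V(H)\subseteq S\subsetneq V(G)$ gives $\p_G(H)\geq 3$, so $\p_{G'}(H)\geq 1>0$. Otherwise $|V(H)|+|E(H)|\leq 4$, and either $\p_G(H)\geq 3$ (in which case the same bound works) or Claim~\ref{claim:trivial} forces $H$ to be one of the five configurations in Figure~\ref{fig:smallconfigs}. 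Only configuration (c) --- a single edge between an $I$-vertex and a $U$-vertex --- contains any $U$-vertex, and it contains exactly one; hence $k_H\leq 1$ there, giving $\p_{G'}(H)\geq\p_G(H)-1=1>0$. In the remaining four configurations $k_H=0$, so $\p_{G'}(H)=\p_G(H)>0$. Having verified the potential hypothesis for every nonempty $H\subseteq G'[S]$, the minimality of $G$ delivers the required \Ifp.

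The only potential pitfall is that small subgraphs of low potential might be tipped below zero by the reassignment, but Claim~\ref{claim:trivial} pins down the complete catalogue of such obstructions, and only one of them has any vertex of $U$ at all (a single one). Thus reassigning up to two vertices from $U$ to $F$ can never create a nonpositive-potential induced subgraph in $G'[S]$, and larger subgraphs are handled uniformly by the slack of $3$ in Lemma~\ref{lem:kahuna}.
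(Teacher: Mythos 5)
Your proof is correct and follows essentially the same route as the paper's: both reduce to the small-configuration catalogue of Claim~\ref{claim:trivial} via the potential lower bound of Lemma~\ref{lem:kahuna}, and both conclude that any low-potential induced subgraph can contain at most one $U$-vertex, so the reassignment drops the potential by at most $1$, keeping it positive.
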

\begin{proof}
We have
\[ |V(G'[S])| + |E(G'[S])| = |V(G[S])| + |E(G[S])| < |V(G)| + |E(G)|, \]
so $G'[S] \prec G$.
Thus $G'[S]$ has an \Ifp{} unless reassigning $u$ and $v$ resulted in $G'$ having some subgraph of non-positive potential.
Suppose $W$ is such a subgraph.
Then $V(W)\cap\{u,v\}\neq\emptyset$ otherwise $\p_G(W)\leq0$, which contradicts the choice of $G$.

As a vertex in $U$ has potential one more than a vertex in $F$, $\p_{G'}(W)\geq\p_G(W)-2$, implying that $\p_G(W)\le 2$ and Lemma~\ref{lem:kahuna} implies $|V(W)|+|E(W)| \leq 4$.
Claim~\ref{claim:trivial} then yields that $W$ can not contain both $u$ and $v$.
Thus $\p_{G'}(W)=\p_G(W)-1$.
However, as $W$ contains a vertex of $U$, Claim~\ref{claim:trivial} also gives that $\p_G(W)\geq2$.
Hence $\p_{G'}(W)\geq1$, a contradiction, so no such $W$ exists.
\end{proof}

The following two claims restrict the local structure around 3-vertices in $\bar{I}$.

\begin{claim} \label{claim:3v2t}
If $v$ is a 3-vertex in $\overline{I}$ with no neighbors in $I$, then $v$ is not incident to a 2-thread in $G$.
\end{claim}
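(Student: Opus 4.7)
Suppose for contradiction that $v$ is a 3-vertex in $F \cup U$ with no neighbor in $I$, and that $v$ borders a 2-thread $u_1 u_2$ with $u_1 \in N(v)$ and $u_2 \in N(w)$. Let $a, b$ denote the other two neighbors of $v$. Since $v$ has no $I$-neighbor, $a, b \in F \cup U$; moreover $a$ and $b$ cannot both lie in a 2-independent set, as they share the common neighbor $v$. The plan is to contradict the minimality of $G$ by constructing an \Ifp{} of $G$ extending $I \sqcup F \sqcup U$. Set $S = V(G) \setminus \{u_1, u_2\}$. Since $G[S] \prec G$, I apply Lemma~\ref{lem:move} to $S$, reassigning $v$ from $U$ to $F$ if $v \in U$, to obtain an \Ifp{} $\I \sqcup \f$ of the (possibly modified) $G[S]$ in which $v \in \f$.

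Given such a partition, I try to extend it to $G$ by placing $u_1$ and $u_2$. There are three candidate placements: (E1) both $u_1, u_2 \in \f$; (E2) $u_1 \in \I$ and $u_2 \in \f$; (E3) $u_1 \in \f$ and $u_2 \in \I$. Analyzing the distance and forest conditions gives the following criteria: (E1) is legal except when $v$ and $w$ lie in the same tree of $G[\f]$; (E2) is legal iff none of $a, b, w$ is in $\I$ (these being the vertices at distance at most 2 from $u_1$); and (E3) is legal iff $w \notin \I$ and $w$ has no neighbor in $\I$ within $G[S]$. Any successful placement extends the partition and thereby contradicts the choice of $G$.

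It remains to rule out the case that all three placements fail for the chosen partition. Failure of (E1) and (E3) together forces $w \in \f$ (so that the failure of (E3) supplies an $\I$-neighbor $z$ of $w$ in $G[S]$), and failure of (E2) then forces one of $a, b$ to lie in $\I$. In this configuration, I would re-apply Lemma~\ref{lem:move} to $S$ with a second reassignment, pushing the offending $\I$-vertex (either the $\I$-element of $\{a, b\}$ or the vertex $z$) from $U$ into $F$, and re-examine the three candidate placements in the resulting partition. The main obstacle is verifying that this second reassignment is always permitted by Lemma~\ref{lem:move}, i.e., that the offending vertex sits in $U$ rather than $I$, and that iterating this step produces a partition for which one of (E1)--(E3) succeeds. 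For $a$ or $b$ the reassignment is immediate since $v$ has no $I$-neighbor; for $z$ it requires showing that no vertex of $I$ sits at distance $2$ from $u_2$ through $w$, which I would obtain using Claim~\ref{claim:cutset} together with the catalogue of small-potential configurations from Claim~\ref{claim:trivial}, or by directly exhibiting a subgraph with nonpositive potential in $G$.
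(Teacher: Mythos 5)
Your plan — delete the two thread vertices, invoke Lemma~\ref{lem:move}, then extend the \Ifp{} — is the right shape, but the specific choice of what to reassign leaves a gap that you acknowledge but do not close, and the fix cannot be obtained by iterating within Lemma~\ref{lem:move}.

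The central problem is the reassignment budget. Lemma~\ref{lem:move} allows reassigning \emph{at most two} vertices from $U$ to $F$. You spend one of those two on $v$. If the resulting \Ifp{} of $G[S]$ has (say) $a \in \I$ (blocking (E2)) and $w$ has an $\I$-neighbor $z$ (blocking (E3)), fixing both requires reassigning $a$ and $z$ in addition to $v$, i.e.\ three reassignments. Even if only one further reassignment is needed, each new application of Lemma~\ref{lem:move} produces a \emph{fresh} partition in which a different vertex (e.g.\ $b$ instead of $a$, or a different neighbor of $w$) may now be the obstruction; you give no termination argument. Moreover, your attempt to rule out $z \in I$ via Claims~\ref{claim:cutset} and~\ref{claim:trivial} is not carried out, and it is not true in general that $w$ has no $I$-neighbor, so (E3) genuinely can fail irreparably.

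The way out is to notice that you do not need (E3) at all, and that reassigning $v$ is the wrong expenditure. Reassign $a$ and $b$ (the two non-thread neighbors of $v$) from $U$ to $F$ if necessary; this is legitimate because $v$ has no neighbor in $I$, so $a,b \in F \cup U$. After Lemma~\ref{lem:move}, $a,b \in \f$ is forced. Now if either $v$ or $w$ lies in $\I$, placing both thread vertices in $\f$ creates no cycle, so (E1) succeeds. Otherwise $v,w \in \f$, and you place the thread vertex adjacent to $v$ in $\I$ and the other in $\f$: the distance-2 neighborhood of that vertex consists only of $v$, the other thread vertex, $a$, $b$, and $w$, all of which are now in $\f$, so (E2) succeeds. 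No iteration, no budget overrun, no use of (E3) and no dependence on the neighbors of $w$. This is precisely the paper's argument for an open 2-thread.

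Finally, your proposal silently assumes the thread is open (you posit a second bordering vertex $w \ne v$). A closed 2-thread, where both ends of the thread attach to $v$, requires a separate (short) argument: delete $v$ and both thread vertices, reassign the single remaining neighbor $a$ of $v$ to $F$ if necessary, place $v$ and the far thread vertex in $\f$ and the near thread vertex in $\I$. The paper handles this as a second case; your draft omits it.
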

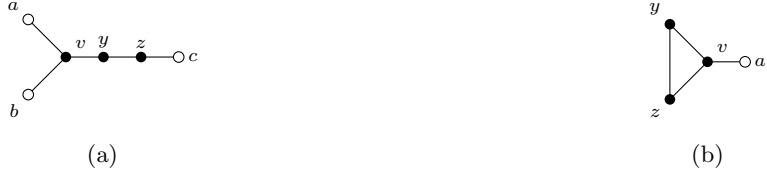
\begin{figure}[!htb]
\centering
	\begin{subfigure}[b]{.48\textwidth}
	\centering
	\begin{tikzpicture}
		\draw (-.5,.5) -- (0,0) -- (-.5,-.5)  (0,0) -- (1.5,0);
		\foreach \x/\y in { 0/0, .5/0, 1/0 } {\fill (\x,\y) circle (2pt); }
		\foreach \x/\y in {-.5/-.5, -.5/.5, 1.5/0 } {\draw[black, fill=white] (\x,\y) circle (2pt); }
		\node[above right] at (0,0) {\scriptsize$v$};
		\node[above left] at (-.5,.5) {\scriptsize$a$};
		\node[below left] at (-.5,-.5) {\scriptsize$b$};
		\node[right] at (1.5,0) {\scriptsize$c$};
		\node[above] at (.5,0) {\scriptsize$y$};
		\node[above] at (1,0) {\scriptsize$z$};
	\end{tikzpicture}
	\caption{}
	\label{fig:32a}
	\end{subfigure}
	\begin{subfigure}[b]{.48\textwidth}
	\centering
	\begin{tikzpicture}
		\draw (0,0) -- (-.5,.5) -- (-.5,-.5) -- (0,0) -- (.5,0);
		\foreach \x/\y in { 0/0, -.5/.5, -.5/-.5 } {\fill (\x,\y) circle (2pt); }
		\draw[black, fill=white] (.5,0) circle (2pt);
		\node[above right] at (0,0) {\scriptsize$v$};
		\node[above left] at (-.5,.5) {\scriptsize$y$};
		\node[below left] at (-.5,-.5) {\scriptsize$z$};
		\node[right] at (.5,0) {\scriptsize$a$};
	\end{tikzpicture}
	\caption{}
	\label{fig:32b}
	\end{subfigure}
\caption{A 3-vertex incident to a 2-thread.}
\label{fig:32t}
\end{figure}

\begin{proof}
Let $v$ be a 3-vertex in $\overline{I}$ with no neighbors in $I$ that is incident to a 2-thread. We consider two cases, depending on whether a 2-thread incident to $v$ is open or closed.

First, suppose that an open 2-thread with vertices $y$ and $z$ is incident to $v$ as in Figure~\ref{fig:32a}; let $a$ and $b$ be the neighbors of $v$ not in this 2-thread.
Let $S = V(G) \setminus \{y,z\}$, and let $G'$ be the assigned graph given by taking $G[S]$ and assigning $a$ and $b$ to $F$, if necessary.
By Lemma~\ref{lem:move}, $G'$ has an \Ifp{} $\I \sqcup \f$.
If $v$ or $c$ is in $\I$, adding $y$ and $z$ to $\f$ extends $\I \sqcup \f$ to $G$ without creating any cycles in $G_{\f}$.
Otherwise $v,c\in \f$ and adding $y$ to $\I$ and $z$ to $\f$ extends $\I \sqcup \f$ to $G$. % with no cycles in $G_{\f}$ and the pairwise distance in $G$ between vertices of $\I$ is at least 2.

Second, suppose that a closed 2-thread with vertices $y$ and $z$ is incident to $v$ as in Figure~\ref{fig:32b}; let $a$  be the neighbor of $v$ not in this 2-thread.
Let $S = V(G) \setminus \{v,y,z\}$, and let $G'$ be the assigned graph given by taking $G[S]$ and assigning $a$ to $F$, if necessary.
By Lemma~\ref{lem:move}, $G'$ has an \Ifp{} $\I \sqcup \f$.
Adding $v$ and $z$ to $\f$ and $y$ to $\I$ extends $\I \sqcup \f$ from $G'$ to $G$ even in the case that $v \in F$.
\end{proof}

\begin{claim} \label{claim:3}
A 3-vertex in $U$ incident to three 1-threads with bordering vertices in $\overline{I}$ does not appear in $G$.
\end{claim}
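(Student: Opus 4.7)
The plan is to proceed by contradiction, supposing that $G$ contains such a 3-vertex $v \in U$, with three incident 1-threads whose internal vertices are $y_1, y_2, y_3$ (each a 2-vertex in $U$) and whose borders are $c_1, c_2, c_3 \in \bar{I} = F \cup U$. Set $S = V(G) \setminus \{v, y_1, y_2, y_3\}$. Since $G[S] \prec G$, the idea is to obtain an {\Ifp} of a suitably modified assigned graph $G'[S]$ via Lemma~\ref{lem:move}, then extend it back to an {\Ifp} of $G$.

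The target extension is $v \in \I$ and $y_1, y_2, y_3 \in \f$. This is legitimate whenever no $c_i$ lies in $\I$: the distance-two vertices from $v$ are exactly the $c_i$'s, and each $y_i$ becomes a leaf of $\f$ (its only $\f$-edge being $y_i c_i$), so no $\f$-cycle is created. If at most two of the $c_i$'s lie in $U$, I apply Lemma~\ref{lem:move} to reassign every $U$-valued $c_i$ from $U$ to $F$; every $c_i$ is then forced into $\f$, and the target extension succeeds. In the remaining case all three $c_i$'s lie in $U$; reassign $c_1, c_2$ to $F$, leaving $c_3$ free. If the resulting {\Ifp} puts $c_3 \in \f$, the target extension again succeeds, so assume $c_3 \in \I$. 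I fall back to putting $v \in \f$ and $y_3 \in \f$ (a forced leaf since $c_3 \in \I$), and then choose the placement of $y_1, y_2$: if $c_1, c_2$ lie in distinct $\f$-components of $G'[S]$, both $y_1, y_2$ may be sent to $\f$ without creating a cycle; otherwise, place one of them, say $y_1$, in $\I$, which is valid provided $c_1$ has no $\I$-neighbor in $G'[S]$ (and symmetrically for $y_2$).

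The main obstacle will be the remaining subcase: all $c_i \in U$, $c_3 \in \I$, the vertices $c_1, c_2$ lie in a common $\f$-component, and both $c_1, c_2$ have an $\I$-neighbor in $G'[S]$, so neither of $y_1, y_2$ may be placed in $\I$. To handle this I plan to re-run Lemma~\ref{lem:move} with one of the other two reassignment choices, $\{c_1, c_3\}$ or $\{c_2, c_3\}$, obtaining a (possibly different) {\Ifp} whose free $c_i$ may now lie in $\f$, completing the extension. If each of the three reassignment choices is simultaneously obstructed in this manner, I will combine the resulting structural information—in particular the forced existence of nearby $\I$-neighbors of the $c_i$'s together with the common $\f$-component joining them—with Claim~\ref{claim:trivial} and Lemma~\ref{lem:kahuna} to expose a subgraph of $G$ having non-positive potential, contradicting the standing hypothesis on $G$.
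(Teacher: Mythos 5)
Your first case (at most two of the bordering vertices $c_i$ lying in $U$) coincides with the paper's argument, so that part is fine. The divergence — and the genuine gap — is in the case where all three $c_i$ lie in $U$. You reassign only two of them to $F$, apply Lemma~\ref{lem:move}, and then try to extend a returned \Ifp{} of $G'[S]$; if the free vertex $c_3$ lands in $\I$, you fall back to $v,y_3 \in \f$ and attempt to dispose of $y_1,y_2$ by a component/\,$\I$-neighbor analysis. The analysis can jam (both $c_1,c_2$ in one $\f$-component and both with $\I$-neighbors), and the proposed rescue — rotating which pair of $c_i$'s is reassigned, and if all three rotations are obstructed, ``combining structural information'' via Claim~\ref{claim:trivial} and Lemma~\ref{lem:kahuna} — is never carried out. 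It is not clear it can be: the three rotations yield \Ifp{}s of three \emph{different} assigned graphs, you have no control over which \Ifp{} Lemma~\ref{lem:move} produces, and the obstruction data (which $\f$-component $c_1,c_2$ fall in, which vertices are $\I$-neighbors) is relative to each particular returned partition, not to $G$ itself. So there is no apparent way to aggregate the three obstructions into a single nonpositive-potential subgraph of $G$.

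The paper avoids this entirely. When all three bordering vertices are in $U$, it reassigns \emph{all three} to $F$ (stepping outside the two-vertex scope of Lemma~\ref{lem:move}) and argues directly: any set $W$ with $\p_{G'}(W)\leq 0$ after the reassignment must contain all three (otherwise the reasoning of Lemma~\ref{lem:move}, applied to the subset actually reassigned inside $W$, rules it out), so $\p_G(W)\leq \p_{G'}(W)+3 \leq 3$, and then
\[
\p_G\bigl(W\cup\{v,y_1,y_2,y_3\}\bigr)=\p_G(W)+4\cdot 5-6\cdot 4 \leq -1,
\]
contradicting the hypothesis that all subgraphs of $G$ have positive potential. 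You should replace your fallback casework with this short potential computation; it is both shorter and actually closes the case.
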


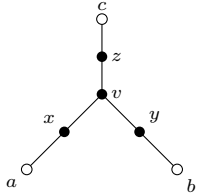
\begin{figure}[!htb]
\centering
	\begin{tikzpicture}
		\foreach \x/\y in { -1/-1, 0/1, 1/-1 } {\draw (0,0) -- (\x,\y); }
		\foreach \x/\y in { 0/0, 0/.5, -.5/-.5, .5/-.5 } {\fill (\x,\y) circle (2pt); }
		\foreach \x/\y in { -1/-1, 0/1, 1/-1 } {\draw[black, fill=white] (\x,\y) circle (2pt); }
		\node[right] at (0,0) {\scriptsize$v$};
		\node[above left] at (-.5,-.5) {\scriptsize$x$};
		\node[above right] at (.5,-.5) {\scriptsize$y$};
		\node[right] at (0,.5) {\scriptsize$z$};
		\node[below left] at (-1,-1) {\scriptsize$a$};
		\node[below right] at (1,-1) {\scriptsize$b$};
		\node[above] at (0,1) {\scriptsize$c$};
	\end{tikzpicture}
\caption{A 3-vertex $v$ incident to three 1-threads.}
\label{fig:3}
\end{figure}
\begin{proof}
Let $v$ be a 3-vertex in $U$ as shown in Figure~\ref{fig:3} where $x,y,z \in U$ are the internal vertices of the 1-threads and $a,b,c \in \overline{I}$ are the other endpoints of the 1-threads.

Suppose first that at most two of $a$, $b$, and $c$ are assigned to $U$; say $c \in F$.
Let $S = V(G) \setminus \{v,x,y,z\}$, and let $G'$ be the assigned graph given by taking $G[S]$ and reassigning $a$ and $b$ to $F$, if necessary.
Lemma~\ref{lem:move} implies there exists an \Ifp{} $\I \sqcup \f$ of $G'$ that extends to an \Ifp{} of $G$ by adding $v$ to $\I$ and $x,y,$ and $z$ to $\f$.

Thus we may assume $a$, $b$, and $c$ are all assigned to $U$ in $G$.
Let $G'=G-\{v,x,y,z\}$ and reassign $a,b$ and $c$ to $F$ in $G'$.
Since $G' \prec G$, $G'$ has an {\Ifp} unless there is some $W\subseteq V(G')$ with $\p_{G'}(W)\leq 0$ and $|W\cap\{a,b,c\}|=3$.
From the reassignment of $a$, $b$, and $c$, $\p_G(W) \leq \p_{G'}(W)+3\cdot(5-4) \leq 3$, which then implies that $\p_G(W \cup \{v,x,y,z\})=\p_G(W)+4\cdot5+6\cdot(-4)\leq -1$, contradicting the choice of $G$.
\end{proof}

Our final claim restricts the structure around 4-vertices in $U$.

\begin{claim}~\label{claim:4U}
A 4-vertex in $U$ incident to three 2-threads and a 1-thread with a bordering vertex in $\overline{I}$ does not appear in $G$.
\end{claim}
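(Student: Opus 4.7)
The plan is to delete $v$ together with the seven thread-internal vertices, reassign the 1-thread's bordering vertex $a$ to $F$ if necessary, apply Lemma~\ref{lem:move} to the remainder, and then extend the resulting {\Ifp} by placing $v$ into $\I$ and all deleted thread-internal vertices into $\f$.

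More concretely, label the 1-thread incident to $v$ as $v$--$x$--$a$ (with $a \in \overline{I}$) and the three 2-threads as $v$--$y_i$--$z_i$--$b_i$ for $i \in \{1,2,3\}$, where $x, y_i, z_i \in U$. Let $S = V(G)\setminus\{v, x, y_1, z_1, y_2, z_2, y_3, z_3\}$ and form $G'$ from $G[S]$ by reassigning $a$ to $F$ if $a \in U$ (no reassignment is needed when $a \in F$). Since $S$ is a nonempty proper subset and at most one $U$-to-$F$ reassignment is used, Lemma~\ref{lem:move} yields an {\Ifp} $\I \sqcup \f$ of $G'$; in particular, $a \in \f$ regardless of the original assignment. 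Extend this partition to $G$ by putting $v$ into $\I$ and each of $x, y_1, z_1, y_2, z_2, y_3, z_3$ into $\f$.

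To verify 2-independence, observe that every $S$-vertex within distance two of $v$ in $G$ must be reached via an $S$-to-deleted edge; the only such vertex is $a$ itself (distance $2$ via $x$), while each $b_i$ is at distance $3$, and by construction $a \in \f$. Old $\I$-pairs in $S$ remain 2-independent after restoring the deleted vertices, since each deleted vertex has at most one neighbor in $S$ (namely $a$ is the unique $S$-neighbor of $x$, and $b_i$ is the unique $S$-neighbor of $z_i$), so no pair of $S$-vertices is newly joined by a length-two path through a deleted vertex. For the forest condition, the new contributions to $G_\f$ are a pendant edge $xa$ (only if $a \in \f$) and, for each $i$, a path $y_iz_i$ with an optional pendant edge $z_ib_i$ (only if $b_i \in \f$); each such subgraph shares at most one vertex with $G'_\f$, so no cycle is introduced. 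This produces an {\Ifp} of $G$ extending $I \sqcup F \sqcup U$, contradicting the choice of $G$.

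The main obstacle is controlling the placement of $a$: putting $v$ into $\I$ forces $a \notin \I$ since they lie at distance two via $x$, and the hypothesis $a \in \overline{I}$ is precisely what permits the single $U$-to-$F$ reassignment that secures $a \in \f$. Lemma~\ref{lem:move} guarantees that this reassignment does not create any subgraph of nonpositive potential, and the remaining verifications are routine bookkeeping about distances to deleted vertices and the tree structure of the re-attached threads.
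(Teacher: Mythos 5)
Your proposal is correct and mirrors the paper's proof essentially verbatim: delete $v$ and all thread-internal vertices, reassign $a$ to $F$ if needed, invoke Lemma~\ref{lem:move}, and extend by placing $v$ in $\I$ and the deleted internals in $\f$. The only thing the paper notes that you elide is that one of the three 2-threads may be closed (so your labeling $v$--$y_i$--$z_i$--$b_i$ with distinct $b_i$ is slightly too specific), but this does not affect the argument since $v\in\I$ eliminates any would-be $\f$-cycle through a closed thread.
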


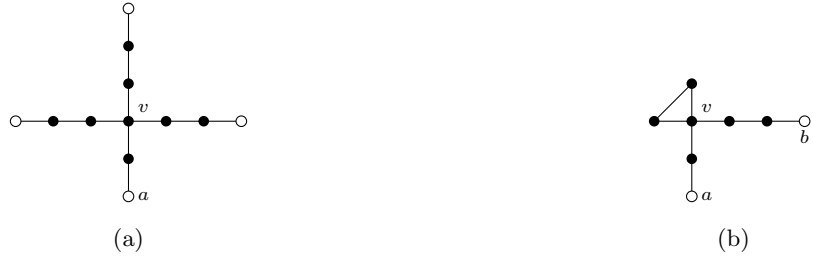
\begin{figure}[!htb]
\centering
	\begin{subfigure}[b]{.48\textwidth}
	\centering
		\begin{tikzpicture}
		\draw (-1.5,0) -- (1.5,0) (0,-1) -- (0,1.5);
		\foreach \x/\y in { 0/0, -1/0, -.5/0, .5/0, 1/0, 0/-.5, 0/.5, 0/1}
			{ \fill (\x,\y) circle (2pt); }
		\foreach \x\y in {0/-1, -1.5/0, 1.5/0, 0/1.5}
			{\draw [black,fill=white] (\x,\y) circle (2pt);}
		\node[above right] at (0,0) {\scriptsize$v$};
		\node[right] at (0,-1) {\scriptsize$a$};
		\end{tikzpicture}
	\caption{}
	\label{fig:4Ua}
	\end{subfigure}
	\begin{subfigure}[b]{.48\textwidth}
	\centering
		\begin{tikzpicture}
		\draw (1.5,0) -- (-.5,0) -- (0,.5) -- (0,-1);
		\foreach \x/\y in { 0/0, -.5/0, .5/0, 1/0, 0/-.5, 0/.5}
			{ \fill (\x,\y) circle (2pt); }
		\foreach\x/\y in {0/-1, 1.5/0} {\draw [black,fill=white] (\x,\y) circle (2pt);}
		\node[above right] at (0,0) {\scriptsize$v$};
		\node[right] at (0,-1) {\scriptsize$a$};
		\node[below] at (1.5,0) {\scriptsize$b$};
		\end{tikzpicture}
	\caption{}
	\label{fig:4Ub}
	\end{subfigure}
\caption{A $4$-vertex $v$ incident to three 2-threads and one 1-thread.}
\label{fig:4U}
\end{figure}
\begin{proof}
Let $v$ be a 4-vertex in $U$ incident with three 2-threads and one 1-thread and let $a \in \overline{I}$ be the other vertex bordering the 1-thread.
Let $T$ be the set of internal vertices in the threads incident to $v$.
At most one of the 2-threads may be closed, as depicted in Figure~\ref{fig:4U}.

Let $S = V(G)\setminus(T\cup\{v\})$, and let $G'$ be the assigned graph formed by taking $G[S]$ and assigning $a$ to $F$, if necessary.
Lemma~\ref{lem:move} implies that there exists an \Ifp{} $\I \sqcup \f$ of $G'$, necessarily with $a\in \f$.
Adding $v$ to $\I$ and the vertices of $T$ to $\f$ extends $\I \sqcup \f$ to $G$ so that vertices in $\I$ have pairwise distance at least two in $G$ and $G_{\f}$ is a forest.
Notice that since any cycle in $G_{\f}$ that would be created must use $v$, $G_{\f}$ is a forest.
\end{proof}

%%%%%%%%%%%%%%%%%%%%%%%%%%%%%%%%%%%%	Discharging and Proof of Theorem~\ref{thm:pot}
\section{Proof of Theorem~\ref{thm:pot}}\label{sec:main} %\ref{thm:pot}

We use discharging the prove the following lemma.

\begin{lem} \label{lem:discharging}
If $G$ is a graph satisfying Claims~\ref{claim:prevredconfig},~\ref{claim:2-F} and~\ref{claim:3v2t}--\ref{claim:4U}, then $\p_G(G) \le 0$.
\end{lem}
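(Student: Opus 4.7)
The plan is to prove the lemma by discharging. Assign each vertex $v$ an initial charge $\mu(v) = \alpha_v - 2 d(v)$, where $\alpha_v$ equals $1$, $4$, or $5$ according as $v$ lies in $I$, $F$, or $U$. Then $\sum_v \mu(v) = |I|+4|F|+5|U|-4|E(G)| = \p_G(G)$, so it suffices to redistribute charge so that every final charge is nonpositive. The only vertices of positive initial charge are $2$-vertices of $U$ (charge $+1$, necessarily thread-internal by the definition of thread) and isolated vertices, which the minimality of $G$ rules out.

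I would use three discharging rules. (R1) Each thread-internal $2$-vertex sends $1/2$ along each side of its thread to the bordering vertex on that side, so that an open $k$-thread deposits $k/2$ at each of its two bordering vertices while a closed $k$-thread deposits $k$ at its unique bordering vertex. (R2) Each $3$-vertex in $U$ sends $+1$ to each of its neighbors in $I$. (R3) For each $1$-thread whose bordering vertices are $u \in \bar I$ and $u' \in I$, $u$ sends $+1/2$ to $u'$. Charge is conserved. For a big (non-thread-internal) vertex $v$, let $t_k(v)$ count its incidences with $k$-threads (open once, closed twice, as in the paper) and $e(v)$ count its edges to other big vertices, so that $d(v) = t_1(v)+t_2(v)+e(v)$ and R1 delivers to $v$ the charge $\theta(v) := \tfrac12 t_1(v)+t_2(v) \le d(v)$.

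The final step is to verify $\mu_{\mathrm{final}}(v) \le 0$ case by case. A thread-internal $2$-vertex ends at $0$. For $v \in I$, the total received from R1+R2+R3 is at most $d(v)$, giving $\mu_{\mathrm{final}}(v) \le 1 - d(v) \le 0$. For $v \in F$, Claim~\ref{claim:2-F} gives $d(v)\ge 3$; when $d(v)=3$ and $t_2(v)\ge 1$, Claim~\ref{claim:3v2t} forces $e(v) \ge 1$ and hence $\theta(v) \le 2$, with larger degrees easier. For $v \in U$ with $d(v)\ge 5$ the bound $\theta(v)\le d(v)$ alone yields $\mu_{\mathrm{final}}(v) \le 5 - d(v) \le 0$. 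The hard part will be the tight cases $v \in U$ with $d(v)\in\{3,4\}$. For $d(v)=3$, Claim~\ref{claim:3v2t} eliminates every configuration $(t_1,t_2,e)$ with $t_2\ge 1$ and $e=0$; each surviving configuration with $t_2\ge 1$ necessarily has an $I$-neighbor (again by Claim~\ref{claim:3v2t}), so R2 provides an extra $-1$; and the leftover configuration $(3,0,0)$ is saved by Claim~\ref{claim:3}, which forces at least one of the three $1$-threads to have an $I$-bordering far vertex, triggering R3 to drain the remaining $+1/2$. For $d(v)=4$, (C3) rules out $t_2=4$ and Claim~\ref{claim:4U} rules out $(1,3,0)$ with the $1$-thread bordering $\bar I$; every remaining case either satisfies $\theta(v)\le 3$ directly, or carries an $I$-bordering $1$-thread whose R3 contribution of $-1/2$ finishes the argument. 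Summing yields $\p_G(G) = \sum_v \mu_{\mathrm{final}}(v) \le 0$, the desired conclusion.
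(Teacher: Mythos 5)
Your proposal is correct and uses the same discharging framework as the paper: the same initial charge (up to an overall sign change), the same reducible configurations (Claims~\ref{claim:prevredconfig}, \ref{claim:2-F}, and \ref{claim:3v2t}--\ref{claim:4U}), and the same case analysis over degrees and thread-incidence counts $t_1,t_2,e$. The genuine difference is the direction and conditioning of the rules: you have thread-internal vertices push their surplus outward unconditionally and then patch the tight cases with two targeted $\bar{I}\to I$ transfers, whereas the paper has vertices with $\ic(v)\ge d(v)$ broadcast to all neighbors first and then lets thread-internal vertices that are still negative pull what they lack. Both designs yield the same bounds in every case, so the choice is one of bookkeeping style rather than substance; your unconditional push is arguably a bit cleaner to verify since the net charge arriving at a big vertex from (R1) is simply $\tfrac12 t_1(v)+t_2(v)$, computed locally without tracking intermediate charges $\mu_1,\mu_2$ through rounds, at the cost of needing the extra rules (R2) and (R3) to shave off the last $1$ or $\tfrac12$ in the tight $U$-cases that the paper handles by having $I$-vertices send charge in its own (R1).
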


As we previously demonstrated that the minimal counterexample to Theorem~\ref{thm:pot} satisfies Claims~\ref{claim:prevredconfig},~\ref{claim:2-F} and~\ref{claim:3v2t}--\ref{claim:4U}, this demonstrates a contradiction and no counterexample exists.

\begin{proof}[Proof of Lemma~\ref{lem:discharging}]
Suppose that $G$ satisfies  Claims~\ref{claim:prevredconfig},~\ref{claim:2-F} and~\ref{claim:3v2t}--\ref{claim:4U}.
Assign an initial charge $\ic$ to vertices of $G$ as follows:
\[ \ic(v) = 2d(v) -
	\begin{cases}
	1, & v\in I \\
	4, & v\in F \\
	5, & v\in U.
	\end{cases} \]
Observe that $\sum\limits_{v\in V(G)} \ic(v) = 4|E(G)|-|I|-4|F|-5|U|=-\p(G)$.

We distribute charge using three rules, (R1), (R2), and (R3), in order, and use $\mu_i(v)$ to denote the charge on a vertex after rule $i$ for $i \in \{1,2,3\}$.
\begin{enumerate}[(R1)]
\item If $v \in V(G)$ satisfies $\ic(v) \geq d(v)$, then $v$ sends charge 1 to each neighbor $u \in N(v)$.
\item If $v$ is the internal vertex of a 1-thread and $\mu_1(v) < 0$, then $v$ pulls charge $\frac{1}{2}$ from each of its neighbors.
\item If $v$ is an internal vertex of a 2-thread and $\mu_2(v) < 0$, then $v$ pulls charge $1$ from its neighbor on the border of the thread.
\end{enumerate}

We will demonstrate $\fc(v) \geq 0$ for all $v \in V(G)$, , which implies that
\[-\p(G) = \sum\limits_{v\in V(G)} \ic(v) = \sum\limits_{v\in V(G)} \fc(v)\geq 0.\]
Observe that if $u \in V(G)$ is not internal to any thread and $v \in N(u)$, then $u$ sends charge to $v$ by at most one rule (R1), (R2), or (R3).

If $v$ is a vertex in $I$, a vertex in $F$ with $d(v)\geq 4$, or a vertex in $U$ with $d(v) \geq 5$, then $\ic(v)\geq d(v)$.
Consequently, $v$ has $\mu_1(v)=\mu_2(v)=\mu_3(v) \geq 0$ since $v$ sends charge $d(v)$ during (R1) and does not send charge using (R2) or (R3).

Next, suppose $v$ is a vertex in $F$ with $d(v) \le 3$.
By Claim~\ref{claim:2-F}, $d(v) \geq 3$. 
If $v$ is incident to a 2-thread, then by Claim~\ref{claim:3v2t}, $v$ has a neighbor in $I$.
This neighbor sends charge 1 to $v$ by (R1) and $v$ sends charge at most 1 to each other neighbor by (R2) or (R3), so $\fc(v) \geq \ic(v)+1-2 \ge 0$.
Otherwise, from (R2), $v$ sends charge at most $\frac{3}{2}$ to incident 1-threads and $\fc(v)\geq\ic(v)-3\cdot\frac{1}{2}=\frac{1}{2}>0$, as desired.

Recall that by Claim~\ref{claim:prevredconfig}, no vertex in $U$ has degree less than two.
Since $G$ contains no $3^+$-thread by Claim~\ref{claim:prevredconfig}, if $v$ is a $2$-vertex in $U$, then $\mu_0(v) = -1$ and $\mu_3(v) \geq 0$ by either (R1), (R2), or (R3).

Assume then that $v$ is a vertex in $U$ with $d(v) = 3$.
If $v$ is incident to a 2-thread, then by Claim~\ref{claim:3v2t}, $v$ has a neighbor in $I$.
This neighbor sends charge 1 to $v$ by (R1) and $v$ sends charge at most 1 to each other neighbor by (R2) or (R3), so $\fc(v) \geq \ic(v)+1-2 \ge 0$, as desired.
If $v$ is not incident to any 2-threads and is incident to fewer than three 1-threads, then $\fc(v)\geq\ic(v)-2\cdot \frac{1}{2} \geq0$, as desired.
Otherwise, $v$ is incident to exactly three 1-threads, and at least one of the 1-threads is bordered by a vertex $a$ in $I$ by Claim~\ref{claim:3}.
Since $a$ sends charge 1 to the internal vertex of the 1-thread by (R1), $v$ sends charge at most $\frac{1}{2}$ to the other neighbors by (R2), and hence $\fc(v)\geq\ic(v)-2\cdot \frac{1}{2}\geq0$, as desired.

Finally, suppose $v$ is a vertex in $U$ with $d(v) = 4$.
By Claim~\ref{claim:prevredconfig}, $v$ is not incident to four 2-threads.
By Claim~\ref{claim:4U}, if $v$ is incident to three 2-threads and a 1-thread, then the other vertex $a$ bordering the 1-thread is in $I$.
Since $a$ sends charge 1 to the 1-thread by (R1), $v$ sends charge at most 1 to at most three neighbors by (R2) and (R3) and hence $\fc(v)\geq\ic(v)-3\left(1\right)\geq0$, as desired.
If $v$ is incident to three 2-threads and no other thread, then $\fc(v) \ge \ic(v) - 3\cdot 1 \ge 0$.
Finally, if $v$ is incident to at most two 2-threads and up to two 1-threads, then $\fc(v) \ge \ic(v) - 2\cdot 1 - 2\cdot \frac{1}{2} \geq0$.

Therefore, every vertex in $G$ has non-negative final charge, which completes the proof.
\end{proof}

\bibliographystyle{abbrv}
\bibliography{Bib}

\end{document}